\documentclass[11pt]{article}
\usepackage[leqno]{amsmath}
\usepackage{amssymb}
\usepackage{graphicx,amssymb} 
\usepackage[small]{caption}
\usepackage{amssymb,amsfonts}
\usepackage[backrefs,lite,alphabetic]{amsrefs} 
\usepackage[all,arc]{xy}
\usepackage{cancel}
\usepackage{enumerate}
\usepackage{mathrsfs}
\usepackage{pgfplots}
\usepackage{fullpage}
\usepackage{changepage}
\usepackage{amsthm}

\pgfplotsset{soldot/.style={color=blue,only marks,mark=*}} \pgfplotsset{holdot/.style={color=blue,fill=white,only marks,mark=*}}
\newcommand{\beqn}{\begin{eqnarray*}}
\newcommand{\eeqn}{\end{eqnarray*}}

\numberwithin{equation}{section}
\newtheorem{lemma}[equation]{Lemma}

\newtheorem{prop}[equation]{Proposition}
\newtheorem{cor}[equation]{Corollary}

\newtheorem{claim*}{Claim}
\newtheorem{thm}[equation]{Theorem}

\theoremstyle{definition}
\newtheorem{rmk}[equation]{Remark}

\newtheorem{eg}[equation]{Example}

\newtheorem{defn}[equation]{Definition}

\textwidth 6.2in
\oddsidemargin .12in
\evensidemargin .12in
\pagestyle{plain}

\newcommand{\Z}{\mathbb{Z}}

\newcommand{\C}{\mathbb{C}}

\setcounter{MaxMatrixCols}{20}

\setlength\parindent{0pt}

\title{Explicit Bound on Collective Strength of Regular Sequences of Three Homogeneous Polynomials}
\author{Chiahui Cheng\\
Department of Mathematics\\
University of Wisconsin, Madison\\
USA}

\begin{document}
\maketitle

\begin{abstract}
Let $f_1,\cdots,f_r\in k[x_1,\cdots,x_n]$ be homogeneous polynomial of degree $d$. Ananyan and Hochster (2016) proved that there exists a bound $N=N(r,d)$ where if collective strength of $f_1,\cdots,f_r\geq N$, then $f_1,\cdots,f_r$ are regular sequence. In this paper, we study the explicit bound $N(r,d)$ when $r=3$ and $d=2,3$ and show that $N(3,2)=2$ and $N(3,3)>2$.\\
\end{abstract}

\begin{center}
\large{
\textbf{Contents}\\\
}
\end{center}

1. Introduction\hfill 2\\
2. Notation and background\hfill 3\\
3. The bound on collective strength when $r=1$ and $r=2$\hfill 4\\
4. Minors of the $(n+1)\times n$ matrix\hfill 5\\
5. The bound on collective strength when $r=3$ and $d=2$\hfill 5\\
6. Homogeneous polynomials and column grading\hfill 9\\
7. The bound on collective strength when $r=3$ and $d=3$\hfill 10\\
8. References\hfill 18\\

\newpage
\setcounter{section}{0}


\section{Introduction}

Let $R$ denote a polynomial ring over an algebraic closed field $K$, say $R=K[x_1,\cdots,x_N]$. Let $I$ be an ideal of $R$ generated by $r$ homogeneous polynomials of degree at most $d$. Stillman's Conjecture states that there is a bound $M(r,d)$ on the projective dimension of $I$, which depends only on $r$ and $d$.\\

As part of their proof, Ananyan and Hochster \cite{Ananyan-Hochster-small} introduced the term strength $k$ or $k$-strong, which is equivalent to the following definition:

\begin{defn}\label{strength} Let $f$ be a homogeneous element of $R$. The \textbf{strength} of $f$ is the minimal integer $k\geq 0$ for which there is a decomposition $f =\sum_{i=1}^{k+1} g_ih_i$ with $g_i$ and $h_i$ homogeneous elements of $R$ of positive degree, or $\infty$ if no such decomposition exists. We take the convention that the strength of any constant, including 0, is -1. The \textbf{collective strength} of a set of homogeneous elements $\{f_i\}_{i\in I}$ of $R$ is the minimal strength of a non-trivial homogeneous $k$-linear combination.\\
\end{defn}

%
%

Ananyan and Hochster \cite{Ananyan-Hochster-strength} proved that there is a bound on the projective dimension of $R/$I that depends only on $n$, and not on $N$. They also showed that there are functions $^\eta A(n, d)$ with the following property: in a graded $n$-dimensional $K$-vector subspace $V$ of $R$ spanned by forms of degree at most $d$, if no nonzero form in $V$ is in an ideal generated by $^\eta A(n, d)$ forms of strictly lower degree, which is the strength condition, then any homogeneous basis for $V$ is an regular sequence.\\

In later work, \cite{Ananyan-Hochster-strength}, they obtained the function $^\eta A(n,d)$ in degree 2, 3 and 4. In degree 2,  they obtained an explicit value for $^\eta B(n,2)$ that gives the Stillman’s conjecture for quadrics when there is no restriction on $n$. In particular, for an ideal $I$ generated by $n$ quadrics, the projective dimension $R/I$ is at most $2^{n+1}(n-2) + 4$.\\

On the other hand, Erman, Sam and Snowden \cite{Erman-Sam-Snowden} used an alternative approach, i.e., proving that the limiting rings are polynomial rings and using these polynomiality results to show the existence of small subalgebras $^\eta \mathcal{B}(n,d)$ in \cite{Ananyan-Hochster-small} and Stillman's conjecture.\\

%
%
%

In fact, in \cite{Ananyan-Hochster} when proving that there is a bound on the projective dimension of $R/I$ where $R=(x_1,\cdots,x_N)$ and $I$ is an ideal of $R$ generated by $n$ polynomials of degree at most 2, that depends only on $n$, not on $N$, Ananyan and Hochster show that there exists a positive integer $C(n)$ for positive integer $n$ such that these $n$ homogeneous polynomials is a regular sequence when their collective strength $\leq C(n)$ and $C(n)$ approaches to $2n^{2n}$.\\

With all these as the motivation, we are interested in finding the explicit bound $N=N(r,d)$. i.e., For homogeneous polynomials $f_1,\cdots,f_r$ of degree $\leq d$, if $N=N(r,d)$ is the bound for the collective strength such that $\{f_i\}$ is a regular sequence, we would like to know that if $N$ depends on (increase with) $d$ ?\\

More specifically, we study the explicit bound $N=N(r,d)$ on collective strength of regular sequence of three homogeneous polynomials, $f_1,f_2,f_3$. Note that $r=3$ is the first interesting case. In fact, we will give a simple proof that $N$ does not depend on $d$ when $r=1$ or 2. Our main results will show that $N(3,2)=2$ and $N(3,3)>2$.\\

%


\section{Notation and Background}

Here are some basic foundations, notations and background that are used in this paper:

\begin{defn} A sequence of elements $x_1,\cdots,x_d$ in a ring $R$ is called a \textbf{regular sequence} on $R$ if the ideal $\langle x_1,\cdots,x_i\rangle$ is proper and for each $i$ with $i=1,\cdots,d-1$, the image of $x_{i+1}$ is a nonzerodivisor in $R/\langle x_1,\cdots,x_i\rangle$.
\end{defn}



\begin{thm}\textbf{Principal Ideal Theorem}\label{PIT} \cite[Theorem 10.1]{Eisenbud} If $x\in R$, and $P$ is the minimum among primes of $R$ containing $(x)$, then codim $P\leq 1$.
\end{thm}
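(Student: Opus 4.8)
The plan is to establish this as the classical Krull Hauptidealsatz, reducing first to a local statement and then exploiting the Artinian behaviour forced by the minimality of $P$ over $(x)$. Since $R$ here is Noetherian, I may localize at $P$: replacing $R$ by $R_P$ changes neither the height of $P$ nor the hypothesis that $P$ is minimal among primes containing $x$. So I would assume from the outset that $R$ is Noetherian local with maximal ideal $P$, and that $P$ is minimal over $(x)$. The goal then reduces to showing that every prime $Q\subsetneq P$ has height $0$, which immediately gives $\operatorname{codim} P\le 1$.

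The engine of the argument is that $R/(x)$ is Artinian: as $P$ is minimal over $(x)$ and is now the maximal ideal, $R/(x)$ is a Noetherian local ring of dimension $0$, so its descending chains of ideals stabilize. I would apply this to the images of the symbolic powers $Q^{(i)}:=Q^iR_Q\cap R$ of a fixed prime $Q\subsetneq P$. The chain $\bigl(Q^{(i)}+(x)\bigr)/(x)$ descends, so there is an $n$ with $Q^{(n)}+(x)=Q^{(n+1)}+(x)$.

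From this I would extract the relation $Q^{(n)}=Q^{(n+1)}+x\,Q^{(n)}$: given $a\in Q^{(n)}$, write $a=b+cx$ with $b\in Q^{(n+1)}$, so $cx=a-b\in Q^{(n)}$; because $x\notin Q$ (as $Q\subsetneq P$ with $P$ minimal over $(x)$) while $Q^{(n)}$ is $Q$-primary, this forces $c\in Q^{(n)}$. Since $x$ lies in the maximal ideal $P$, Nakayama applied to the finitely generated module $Q^{(n)}/Q^{(n+1)}$ gives $Q^{(n)}=Q^{(n+1)}$. Localizing at $Q$ yields $Q^nR_Q=Q^{n+1}R_Q$, and a second Nakayama argument over the local ring $R_Q$ gives $Q^nR_Q=0$. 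Hence $QR_Q$ is nilpotent, $R_Q$ has dimension $0$, and $Q$ has height $0$, as required.

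I expect the main obstacle to be the bookkeeping around symbolic powers: one must confirm that each $Q^{(i)}$ is genuinely $Q$-primary, that the step $a-b\in Q^{(n)}$ with $x\notin Q$ truly yields $c\in Q^{(n)}$ from the primary property, and that the localization identity $Q^{(n)}R_Q=Q^nR_Q$ holds, all while keeping the two distinct Nakayama applications (one over $R$, one over $R_Q$) separate. Everything else is a routine localization and descending-chain argument.
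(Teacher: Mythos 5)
The paper gives no proof of this statement; it is quoted as a citation to Eisenbud [Theorem 10.1], so the only thing to compare against is the proof in that reference. Your argument is correct and complete, and it is precisely that classical proof of Krull's Hauptidealsatz: localize at $P$, observe $R_P/(x)$ is Artinian because minimality makes $PR_P$ the unique prime over $(x)$, stabilize the chain of symbolic powers $Q^{(n)}+(x)$, use the $Q$-primary property of $Q^{(n)}$ together with $x\notin Q$ to get $Q^{(n)}=Q^{(n+1)}+xQ^{(n)}$, and conclude with Nakayama over $R_P$ and then over $R_Q$ — so there is nothing to criticize beyond noting that the Noetherian hypothesis you invoke (true here, as $R$ is a polynomial ring over a field) is essential and tacit in the paper's statement.
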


\begin{defn} \cite{Miller-Stephenson} \label{quadratic} A \textbf{quadratic form} is a polynomial in several variables with coefficients in an algebraically closed filed $\mathbf{C}$, all of whose terms have degree 2. If the variables in question are $x,y,$ and $z$, examples of quadratic forms include $x^2, xy-z^2$ and $2x^2+3y^2-5z^2$. To every quadratic form $q$ there is a nonnegative integer called the \textbf{rank} of $q$. For example, the quadratic form $x^2$ has rank 1. Quadratic forms of the form $\alpha x^2+\beta y^2$, where $\alpha,\beta\in\mathbf{C}^*$, have rank 2. In fact, every nonzero quadratic form $q$ can be written in the form $X_1^2+\cdots+X_k^2$ for some positive integer $k$ and some linearly independent set $\{X_1,\cdots,X_k\}$ of linear forms in the original variables, and $k$ is precisely the rank of $q$.
\end{defn}

\begin{rmk}\label{rank_strength} By changing the coordinate, one can see that any quadratic form of rank $k$ with $k$ even has strength $\frac{1}{2}k-1$ since $X_1^2+\cdots+X_k^2$ can be written as $X_1X_2+\cdots+X_{k-1}X_{k}$ that has $\frac{1}{2}k$ terms.
\end{rmk}

\begin{lemma}\label{dim} Let $(R,m)$ be a local ring and let $x\in m$ and assume that $x$ is not a zero divisor. Then dim $(R/(x))=$ dim $R-1$.
\end{lemma}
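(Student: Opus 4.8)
The plan is to establish the two inequalities $\dim(R/(x)) \le \dim R - 1$ and $\dim(R/(x)) \ge \dim R - 1$ separately, assuming throughout (as is implicit in the polynomial-ring setting of the paper) that $R$ is Noetherian so that the theory of associated primes and systems of parameters is available.

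For the upper bound I would consume the hypothesis that $x$ is a nonzerodivisor. In a Noetherian ring the set of zerodivisors is the union of the associated primes, and every minimal prime is associated; hence a nonzerodivisor lies in no minimal prime of $R$. Now take a chain $\bar P_0 \subsetneq \cdots \subsetneq \bar P_s$ of primes in $R/(x)$ of length $s = \dim(R/(x))$, and lift it via the correspondence theorem to a chain $P_0 \subsetneq \cdots \subsetneq P_s$ of primes of $R$ each containing $x$. Since $x \in P_0$ but $x$ lies in no minimal prime, $P_0$ is not minimal, so some prime $Q \subsetneq P_0$ exists; prepending $Q$ produces a chain of length $s+1$ in $R$, giving $\dim R \ge s + 1$ and hence $\dim(R/(x)) \le \dim R - 1$.

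For the lower bound, which does not use the nonzerodivisor hypothesis, I would show that $\dim R \le \dim(R/(x)) + 1$ for any $x \in m$. Choose elements $y_1,\dots,y_s \in m$ whose images form a system of parameters for $R/(x)$, so that $m$ is the unique minimal prime over the ideal $(x, y_1, \dots, y_s)$ of $R$. This ideal is generated by $s+1$ elements, so the generalized form of the Principal Ideal Theorem (the codimension-$(s+1)$ version of Theorem \ref{PIT}) yields $\dim R = \operatorname{codim} m \le s+1 = \dim(R/(x)) + 1$. Combining the two inequalities produces the claimed equality.

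The only delicate point is the upper bound: everything hinges on promoting ``nonzerodivisor'' to ``avoids every minimal prime,'' which is precisely where the hypothesis is used and where Noetherianity enters through the theory of associated primes. The lower bound is a routine application of Krull's height theorem once a system of parameters modulo $x$ is fixed, so I expect the associated-prime step to be the main thing to get right.
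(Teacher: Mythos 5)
Your proposal is correct and follows essentially the same two-step structure as the paper's proof: the upper bound $\dim(R/(x)) \le \dim R - 1$ via the fact that a nonzerodivisor avoids every associated (hence every minimal) prime, and the lower bound via Krull's theorem. The one improvement worth noting is that your system-of-parameters argument for the lower bound is more rigorous than the paper's, which simply cites the codimension-one Principal Ideal Theorem even though that inequality really rests on the generalized height theorem exactly as you invoke it.
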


\begin{proof} Recall that dimension of $R/(x)$ is the maximum length of the chain $$(x)\subsetneq P_1\subsetneq P_2\subsetneq\cdots$$
since the set of zero divisors is the union of the associated primes $\cup_{\textbf{Assoc}(R)}P$. So if $x$ is not a zero divisor, $(x)$ does not belong to any associated prime. In particular, it does not belong to any minimum prime. So if we take a chain of prime ideals in $R$ containing $x$, say, taking a maximum chain, the smallest element we called the minimum prime. So if $x$ is not a zero divisor, $(x)$ is not in this chain. That means $(x)$ can never be the minimum prime in a chain. Hence, if $x$ is not a zero divisor, the chain containing $(x)$ has to be strictly shorter which means dim $R/(x)<$ dim $R$.\\

Now, Theorem \ref{PIT} (Principal Ideal Theorem) gives dim $R/(x)\geq$ dim$R$ - 1, hence we have the equality. In another words, adding the hypothesis of $x$ being a nonzero divisor drops dimension exactly one, i.e. $$\text{dim } R/(x) = \text{dim }R - 1$$
\end{proof}

\begin{prop}\label{reg} In a polynomial ring, $f_1,\cdots,f_r$ are regular sequence if and only if $I=\langle f_1,\cdots,f_r\rangle$ has codim $(I)=r$.
\end{prop}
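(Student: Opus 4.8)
The plan is to exploit that the polynomial ring $R$ is Cohen--Macaulay (indeed a Cohen--Macaulay domain), so that the codimension of $I$ equals $\dim R-\dim R/I$ and every ideal generated by a regular sequence is unmixed. Throughout I write $\mathrm{codim}(I)$ for the height of $I$, and I use Krull's height theorem in the form that an ideal generated by $s$ elements has codimension at most $s$; this is the natural strengthening of the Principal Ideal Theorem (Theorem~\ref{PIT}).

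For the forward implication, assume $f_1,\dots,f_r$ is a regular sequence. Localizing at a maximal ideal containing $I$ (the irrelevant maximal ideal, since the $f_i$ are homogeneous of positive degree), I would apply Lemma~\ref{dim} once for each $f_i$: because each $f_{i+1}$ is a nonzerodivisor on $R/\langle f_1,\dots,f_i\rangle$, the dimension drops by exactly one at every step, giving $\dim R/I=\dim R-r$. Since $R$ is equidimensional and catenary, $\mathrm{codim}(I)=\dim R-\dim R/I=r$.

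The converse is the substantive direction, and I would prove it by induction on $r$, with the case $r=1$ following because $R$ is a domain: any nonzero nonunit $f_1$ (which is forced by $\mathrm{codim}\langle f_1\rangle=1$) is automatically a nonzerodivisor. For the inductive step, first note that $\mathrm{codim}\langle f_1,\dots,f_{r-1}\rangle=r-1$; it is at most $r-1$ by Krull's height theorem, and if it were strictly smaller then adjoining the single element $f_r$ could raise the height by at most one, contradicting $\mathrm{codim}(I)=r$. By the induction hypothesis $f_1,\dots,f_{r-1}$ is a regular sequence, so $J:=\langle f_1,\dots,f_{r-1}\rangle$ is generated by a regular sequence in the Cohen--Macaulay ring $R$; hence $J$ is unmixed and every associated prime $P$ of $R/J$ is minimal over $J$ with $\mathrm{height}(P)=r-1$.

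It remains to see that $f_r$ is a nonzerodivisor modulo $J$, which is exactly the step where Cohen--Macaulayness is indispensable. If $f_r$ were a zerodivisor on $R/J$, it would lie in some associated prime $P$ of $R/J$; but then $I=\langle f_1,\dots,f_r\rangle\subseteq P$ forces $\mathrm{codim}(I)\le\mathrm{height}(P)=r-1<r$, a contradiction. Thus $f_r$ is a nonzerodivisor modulo $J$ and $f_1,\dots,f_r$ is a regular sequence. The main obstacle is precisely this last point: without the unmixedness guaranteed by the Cohen--Macaulay hypothesis, the equality $\mathrm{codim}(I)=r$ alone would not prevent $f_r$ from falling into an embedded prime of lower height, so the whole converse hinges on $R$ being Cohen--Macaulay.
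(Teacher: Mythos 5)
Your forward direction matches the paper's (iterate Lemma~\ref{dim}), and your converse is a genuine attempt to \emph{prove} what the paper merely quotes in one sentence (that in a Cohen--Macaulay ring a system of parameters is a regular sequence). But that extra detail exposes a real gap: the step asserting $\mathrm{codim}\langle f_1,\dots,f_{r-1}\rangle=r-1$ on the grounds that ``adjoining the single element $f_r$ could raise the height by at most one.'' That principle is false for arbitrary ideals of a polynomial ring. Take $R=k[a,b,c]$ and $J=(ab,ac)$: then $J=(a)\cap(b,c)$ has codimension $1$, yet $J+(a-1)=(a-1,b,c)$ has codimension $3$. Height can jump by more than one exactly when $J$ is not unmixed --- its low-codimension component $V(a)$ simply fails to meet $V(a-1)$ --- and unmixedness of $J$ is something your argument only establishes \emph{after} this step, via the induction hypothesis. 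The same example shows that, read without a homogeneity hypothesis, your induction would ``prove'' a false statement: $f_1=ab$, $f_2=ac$, $f_3=a-1$ generate an ideal of codimension $3$, but they are not a regular sequence, since $b\cdot ac\equiv 0 \bmod (ab)$ while $b\not\equiv 0$. So the converse genuinely needs the standing assumption (implicit in the paper, and invoked by you only in the forward direction) that the $f_i$ are homogeneous of positive degree, or equivalently that one works in the local ring at the irrelevant maximal ideal; it is precisely at your height-jump step that this hypothesis must enter.

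The fix is not long: since the $f_i$ are homogeneous of positive degree, all minimal primes of $J$ and of $I$ are homogeneous, hence contained in $\mathfrak{m}=(x_1,\dots,x_n)$, so all relevant codimensions are unchanged by passing to $R_{\mathfrak m}$. In the Cohen--Macaulay \emph{local} ring $R_{\mathfrak m}$ one has $\mathrm{ht}(Q)=\mathrm{ht}(P)+\mathrm{ht}(Q/P)$ for any primes $P\subseteq Q$ (CM local rings are catenary with $\mathrm{ht}(P)+\dim R_{\mathfrak m}/P=\dim R_{\mathfrak m}$ for every prime $P$), and combining this with the Principal Ideal Theorem applied in $R_{\mathfrak m}/P$, for $P$ a minimal prime of $J$ of least height, gives $\mathrm{ht}(J+(f_r))\le \mathrm{ht}(J)+1$ for $f_r\in\mathfrak m$. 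With that step repaired, the remainder of your argument --- the induction, unmixedness of ideals generated by regular sequences in a Cohen--Macaulay ring, and the associated-prime argument showing $f_r$ is a nonzerodivisor modulo $J$ --- is correct, and it supplies the details behind the paper's one-line citation.
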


\begin{proof}
If $f_1,\cdots,f_c$ regular sequence, then by applying Lemma \ref{dim} $c$ times, we get 

$$\text{dim} R/(f_1,\cdots,f_c) = \text{dim }R - c$$

which means
$$\text{codim } (f_1,\cdots,f_c)=c$$

About the other direction, note that Cohen-Macaulay ring is a ring where every system of parameters is automatically a regular sequence. So if codim $(f_1,\cdots,f_c)=c$, then $f_1,\cdots,f_c$ is a regular sequence.
\end{proof}


\section{The Bound on Collective Strength when $r=1$ and $r=2$}

We want to examine the explicit bound $N=N(r,d)$ for the collective strength of homogeneous polynomials $f_1,\cdots,f_r$ of degree $\leq d$ such that $\{f_i\}$ is a regular sequence. We would like to know whether $N(r,d)$ increases with $r$ fixed and $d\to\infty$.\\ 

When $r=1$, the answer is no since one polynomial is a regular sequence if and only if it is non-zero. In particular, $N(1,d)=0$. If $r=2$, we prove that $N(2,d)=1$ as below.

\begin{lemma}\label{notreq} $f_1$ and $f_2$ are not a regular sequence if and only if gcd$(f_1,f_2)\neq 1$.
\end{lemma}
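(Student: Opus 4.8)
The plan is to prove both directions via the geometry of divisors, using the fact that in a polynomial ring over a field (which is a UFD), irreducibility and greatest common divisors are well-behaved, and that a regular sequence of length $2$ is characterized by codimension $2$ via Proposition \ref{reg}.

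For the forward direction, I would argue by contrapositive: assume $\gcd(f_1,f_2)=1$ and show $f_1,f_2$ form a regular sequence. Since $f_1\neq 0$ is nonzero and $R$ is a domain, $f_1$ is a nonzerodivisor, so the first condition of a regular sequence holds. The key step is to show that $f_2$ is a nonzerodivisor modulo $(f_1)$. Suppose $f_2\cdot g \in (f_1)$ for some $g$, i.e. $f_1 \mid f_2 g$. Because $R$ is a UFD and $\gcd(f_1,f_2)=1$, the two polynomials share no common irreducible factor, so $f_1 \mid g$, which means $g\in(f_1)$; hence $f_2$ is a nonzerodivisor in $R/(f_1)$. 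This establishes that $f_1,f_2$ is a regular sequence, provided the ideal $(f_1,f_2)$ is proper, which holds since these are homogeneous of positive degree.

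For the reverse direction, I would again use the contrapositive: assume $\gcd(f_1,f_2)=g\neq 1$ and show $f_1,f_2$ is \emph{not} a regular sequence. Write $f_1 = g h_1$ and $f_2 = g h_2$. The obstruction is that $h_2 f_1 = h_1 f_2 \in (f_1)$ while $h_2 \notin (f_1)$ (by a degree count, since $\deg h_2 < \deg f_1$ as $\deg g\geq 1$ and $h_2\neq 0$), so $h_2$ is a nonzerodivisor witness failing in $R/(f_1)$, breaking the second condition. Alternatively, and perhaps more cleanly, I would invoke Proposition \ref{reg}: the common factor $g$ cuts out a hypersurface $V(g)$ contained in $V(f_1)\cap V(f_2) = V(f_1,f_2)$, so $\operatorname{codim}(f_1,f_2)\leq \operatorname{codim}(g)=1 < 2$, whence by Proposition \ref{reg} the sequence is not regular.

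The main obstacle I anticipate is handling the edge cases cleanly and deciding which characterization of regular sequence to lean on. The UFD/divisibility argument is the most elementary and self-contained, but one must be careful that the relevant polynomials are homogeneous of positive degree so that $(f_1,f_2)$ is proper and the degree counts (ensuring $h_2\notin (f_1)$) actually go through; I would want to verify that the convention on constants and the positive-degree hypothesis rule out the degenerate situation where one $f_i$ is a unit. The codimension argument via Proposition \ref{reg} is slicker for the reverse direction but requires the Principal Ideal Theorem input already recorded in the excerpt, so I would likely present the divisibility argument as the primary proof and note the codimension viewpoint as the conceptual reason.
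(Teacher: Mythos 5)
Your proposal is correct in substance, but your forward direction takes a genuinely different route from the paper's. For ``not a regular sequence $\Rightarrow \gcd(f_1,f_2)\neq 1$,'' the paper argues directly: failure of regularity forces $\operatorname{codim}(f_1,f_2)<2$ (combining Proposition \ref{reg} with the height bound from Theorem \ref{PIT}), and then a primary decomposition of $(f_1,f_2)$ produces a codimension-one component, which in a polynomial ring is principal, say $(a)$, so that $a$ is a common divisor of $f_1$ and $f_2$. You instead prove the contrapositive by pure divisibility: if $\gcd(f_1,f_2)=1$ and $f_1\mid f_2g$, then unique factorization gives $f_1\mid g$, so $f_2$ is a nonzerodivisor modulo $(f_1)$. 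Your argument is more elementary and self-contained --- it needs neither primary decomposition nor the Principal Ideal Theorem, only the UFD property --- whereas the paper's formulation keeps the lemma inside the codimension framework (regularity $\Leftrightarrow$ codimension $2$) that drives the rest of the paper, e.g.\ the Hilbert--Burch argument of Section 4; your closing remark about $V(g)\subseteq V(f_1,f_2)$ recovers exactly that viewpoint.

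One correction in your reverse direction: you are right, and indeed more careful than the paper, that the multiplier killing $f_2$ must be checked to be nonzero modulo $(f_1)$, but you name the wrong witness. From the identity $h_2f_1=h_1f_2\in(f_1)$, the element that must lie outside $(f_1)$ is $h_1$ (the multiplier of $f_2$), not $h_2$; moreover the degree count is cleanest there, since $\deg h_1=\deg f_1-\deg g<\deg f_1$ holds automatically, while $\deg h_2<\deg f_1$ would require comparing $\deg f_2$ with $\deg f_1$. This is an index slip rather than a gap; with $h_1$ as the witness your proof is complete.
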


\begin{proof}
Let $I=(f_1,f_2)$. If  $f_1$ and $f_2$ are not a regular sequence, by Theorem \ref{PIT} Principal Ideal Theorem, we have codim $(I)<2$. If codim $(I)=0$, there's only one ideal that has codimension zero which is the zero ideal. But this would mean that $f_1=f_2=0$. So codim $(I)=0$ is the same as being the zero ideal. So if two polynomial which are the zero ideal, they both have to be zero. If codim $(I)=1$, by Primary Decomposition Theorem, the ideal $I$ can be decomposed into the intersection of primary ideals. And there exists codimension-one pieces. These codimension-one pieces will be principal and are generated by single element, called $(a)$. i.e. $(f_1,f_2)=(a)\cap\cdots$. What is the relationship between $a$ and $f_i$? The claim is $a\mid f_1$ and $a\mid f_2$. So we want to prove that $a$ is a common factor of $f_1$ and $f_2$. But according to property of intersection of ideals and definition of Primary Decomposition, this $a$ is actually the common factor of $f_1$ and $f_2$. This shows that $a$ divides the gcd$(f_1,f_2)$  and therefore gcd $(f_1,f_2)\neq 1$.\\

%

On the other hand, if $f_1=gh_1$ and $f_2=gh_2$, need to show that $f_2$ is a zero divisor in $R/(f_1)=R/(gh_1)$. Since $h_1f_2=h_1(gh_2)=(h_1g)h_2=0$ on $R/(f_1)$. Hence $f_2$ is a zero divisor in $R/(f_1)$, so $f_1,f_2$ are not a regular sequence. 
\end{proof}

\begin{cor} If strength$(f_1,f_2)\geq 1$, then $f_1, f_2$ are regular sequence.
\end{cor}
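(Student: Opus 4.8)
The plan is to prove the contrapositive: assuming $f_1,f_2$ are \emph{not} a regular sequence, I will exhibit a nontrivial homogeneous linear combination of strength $\leq 0$, which forces the collective strength to be $<1$. The entire argument is powered by Lemma \ref{notreq}, so the first step is simply to invoke it: since $f_1,f_2$ fail to be a regular sequence, $g:=\gcd(f_1,f_2)$ is a nonconstant homogeneous form, and I may write $f_1=gh_1$ and $f_2=gh_2$ with $g$ homogeneous of degree $\geq 1$ and $h_1,h_2$ homogeneous. Because $g\mid f_1$, we automatically have $1\leq \deg g\leq \deg f_1$, which makes the following case split exhaustive.

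The generic case is easy. If $\deg g<\deg f_1$, then $h_1$ is homogeneous of positive degree $\deg f_1-\deg g\geq 1$, so $f_1=gh_1$ is a genuine product of two positive-degree homogeneous forms. By Definition \ref{strength} this means $\mathrm{strength}(f_1)=0$. Since $f_1$ is itself a nontrivial homogeneous linear combination of $\{f_1,f_2\}$ (take coefficients $1,0$), the collective strength is at most $0$, hence $<1$.

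The step I expect to be the only real subtlety is the degenerate case $\deg g=\deg f_1=\deg f_2$. Here the product decomposition $f_1=gh_1$ no longer witnesses strength $0$, because the cofactors $h_1,h_2$ are now constants and $g$ alone could have arbitrarily large strength (e.g. a high-rank quadric). The correct move is to observe that $f_1$ and $f_2$ are then both scalar multiples of $g$, so they are linearly dependent; choosing $(\lambda_1,\lambda_2)\neq(0,0)$ with $\lambda_1 f_1+\lambda_2 f_2=0$ produces a nontrivial homogeneous combination equal to the zero form, whose strength is $-1$ by the stated convention. Thus the collective strength is $-1<1$ in this case as well. Combining both cases, failure of the regular-sequence property always yields collective strength $<1$; contrapositively, $\mathrm{strength}(f_1,f_2)\geq 1$ implies $f_1,f_2$ form a regular sequence, which is the claim.
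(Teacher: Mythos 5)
Your proof is correct and takes essentially the same route as the paper: the contrapositive of the statement, powered by Lemma \ref{notreq}. It is in fact more careful than the paper's own one-line argument, which asserts that failure of regularity forces both $f_1$ and $f_2$ to be reducible --- an assertion that breaks down exactly in your degenerate case, where $f_1$ and $f_2$ are proportional (possibly irreducible) forms of the same degree as $g=\gcd(f_1,f_2)$; your handling of that case via a vanishing nontrivial combination and the convention strength$(0)=-1$ from Definition \ref{strength} closes this gap.
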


\begin{proof}
Note that an element is irreducible if and only if its strength is at least 1. By Lemma \ref{notreq}, if $f_1$, $f_2$ fail to be a regular sequence, then both elements must be reducible and have a common factor. So their collective strength is $\leq 0$.  Thus, if either $f_1$ or $f_2$ is irreducible, then they will automatically form a regular sequence.
\end{proof}

Therefore, we conclude that $N(2,d)=1$, i.e. $N$ does not depend on $d$.\\


\section{Minors of the $(n+1) \times n$ Matrix}

Let $\Phi$ be a $(n+1)\times n$ matrix $(x_{i,j})$ where $(x_{i,j})$ are variables in a polynomial rings $\C\C[x_{1,1},\cdots,x_{n+1,n}]$. Let $f_i$ be the $n\times n$ minor obtained by dropping the $i^{th}$ row. Then $\langle f_1,f_2,\cdots,f_{n+1}\rangle$ has codim 2 by the Hilbert-Burch Theorem \cite[Theorem 20.15]{Eisenbud}. Note that every $n\times n$ minor has strength $\leq n-1$ by Laplace expansion. \\


Hence, if $f_1,f_2,f_3$ are $n\times n$ minors of the $(n+1) \times n$ matrix, by Proposition \ref{reg} they are not a regular sequence because the maximal minors of this matrix have codimension 2. i.e. $\langle f_1,f_2,f_3\rangle$ has codimension 2.\\

%
%

%
%
%
%

This is a natural way to build a collection of three homogeneous polynomials of degree $n$ that are not a regular sequence. To prove that $N(3,n)>n-1$, it is enough to show that their collective strength is $n-1$. Thus, there exists homogeneous polynomials $f_1,f_2,f_3$ of degree $n$ with collective strength $n-1$ that is not a regular sequence. i.e. $N(3,n)>n-1$.\\


\section{The Bound on Collective Strength when $r=3$ and $d=2$}

While the main goal of this paper is to prove that $N(3,3)>3$, this section will focus on the case $r=3$ and $d=2$.  We obtain a stronger result: $N(3,2) = 2$.

\begin{lemma}\label{prime} Let $f_1$, $f_2\in R$ be homogeneous polynomials. If the singular locus of $V(f_1,f_2)$ has codimension $>4$, then the ideal $\langle f_1,f_2\rangle$ is prime.
\end{lemma}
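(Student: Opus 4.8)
The plan is to prove that $\langle f_1,f_2\rangle$ is prime by showing that the quotient $A:=R/\langle f_1,f_2\rangle$ is a normal domain. I would obtain normality from Serre's criterion, which states that a Noetherian ring is normal exactly when it satisfies $(R_1)$ (regular in codimension $1$) and $(S_2)$, and then upgrade ``normal'' to ``domain'' using the graded structure. Throughout, the Jacobian criterion identifies the non-regular locus of $A$ with the singular locus $\operatorname{Sing}(V)$ of $V=V(f_1,f_2)$, so the hypothesis on $\operatorname{codim}\operatorname{Sing}(V)$ is precisely a statement about where $A$ fails to be regular.

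The first step is to establish $(S_2)$, for which it suffices to show $A$ is Cohen--Macaulay, and for that it suffices that $f_1,f_2$ form a regular sequence: then $A$ is a complete intersection, hence Cohen--Macaulay, hence $(S_2)$. To see that $f_1,f_2$ is a regular sequence I argue by contradiction. If it is not, then by Lemma \ref{notreq} they share a common factor $g$ with $\deg g\geq 1$, so $f_1=gh_1$, $f_2=gh_2$ and $V=V(g)\cup V(h_1,h_2)$ is reducible with a codimension-one component $V(g)$. The locus where this component meets the remainder of $V$, namely $V(g,h_1,h_2)$, is cut out by three forms and hence has codimension at most $3$; at such points $A$ is locally reducible and therefore singular, so $\operatorname{Sing}(V)$ has codimension at most $3$, contradicting the hypothesis $\operatorname{codim}\operatorname{Sing}(V)>4$. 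Thus $f_1,f_2$ is a regular sequence, and by Proposition \ref{reg} and Lemma \ref{dim} we have $\dim A=N-2$.

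Next I would verify $(R_1)$. Since $A$ is now a complete intersection of codimension $2$, the Jacobian criterion gives that the non-regular locus of $A$ equals $\operatorname{Sing}(V)$. The hypothesis $\operatorname{codim}_R\operatorname{Sing}(V)>4$ yields $\dim\operatorname{Sing}(V)<N-4$, and because $A$ is Cohen--Macaulay (hence catenary and equidimensional) the non-regular locus consists of primes of $A$ of height greater than $2$. In particular every prime of height $\leq 1$ is regular, so $(R_1)$ holds (indeed $(R_2)$ holds). Combining this with $(S_2)$, Serre's criterion shows that $A$ is normal.

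Finally I would pass from normality to integrality. Since $f_1,f_2$ are homogeneous of positive degree, $A$ is a nonnegatively graded ring with $A_0=K$, so it contains no nontrivial idempotents and $\operatorname{Spec} A$ is connected. A connected normal Noetherian ring is a product of normal domains with a single factor, hence a domain; therefore $A$ is a domain and $\langle f_1,f_2\rangle$ is prime. I expect the main obstacle to be the regular-sequence step of the second paragraph: making rigorous that a shared factor forces $\operatorname{Sing}(V)$ to have codimension at most a constant (here tracking exactly which points of the reducible, possibly non-equidimensional scheme are singular), and handling the precise bookkeeping between the codimension of the singular locus in the ambient space and the height of the corresponding primes in $A$.
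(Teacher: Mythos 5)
Your proposal is correct in outline but takes a genuinely different route from the paper. The paper argues by contradiction with a short geometric argument: if $\langle f_1,f_2\rangle$ were not prime, then $V(f_1,f_2)$ would have two irreducible components, each of codimension $2$; two such subvarieties of projective space must meet in codimension at most $4$, that intersection lies in the singular locus, and this contradicts the hypothesis. You instead prove that $A=R/\langle f_1,f_2\rangle$ is a normal domain via Serre's criterion: the singular-locus hypothesis forces $f_1,f_2$ to be a regular sequence (your contradiction argument with the common factor $g$), so $A$ is a complete intersection and hence $(S_2)$; the codimension hypothesis plus the Jacobian criterion gives $(R_1)$; and graded connectedness ($A_0=K$, no nontrivial homogeneous idempotents) upgrades ``normal'' to ``domain.'' What each approach buys: the paper's argument is brief and geometrically transparent, but it silently assumes that ``not prime'' implies ``reducible with codimension-$2$ components,'' which omits both the common-factor case (where components have codimension $1$) and the case of an ideal that is primary but not prime (irreducible support but non-reduced structure, e.g.\ $\langle x^2,y\rangle$); your route closes both gaps, since non-reducedness is excluded by normality and the common-factor case is exactly your regular-sequence step. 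One point in your step needs patching, as you anticipated: at a point of $V(g,h_1,h_2)$ where $V(h_1,h_2)\subseteq V(g)$, the local ring of $A$ need not be ``locally reducible'' --- it is non-reduced instead --- but it is still non-regular (a regular local ring is a domain, hence reduced), so the contradiction survives; alternatively, observe that when $f_1=gh_1$ and $f_2=gh_2$ the two rows of the Jacobian matrix are both multiples of the gradient of $g$ at every point of $V(g)$, so the Jacobian-defined singular locus contains all of $V(g)$ and has codimension at most $1$, giving an even cheaper contradiction.
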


\begin{proof}
Let $X=V(f_1,f_2)$ be the vanishing locus of the ideal $\langle 
f_1,f_2\rangle$. If the ideal is not prime, the corresponding vanishing locus has multiple components. 
Assume for contradiction that the ideal $\langle f_1,f_2\rangle$ is not prime.  Then X will have multiple irreducible components, $X_1$ and $X_2$, each of which has codimension at two.  Since $X_1$ and $X_2$ are subsets of projective space, they will intersect in codimension at most 4, and this intersection will belong to the singular locus of $X$.  It follows that the singular locus of $X$ has codimension at most 4 inside of projective space. Hence if the singular locus of $X$ has codimension bigger than 4, then we have a contradiction, implying that the ideal $\langle f_1,f_2\rangle$ has to be prime.
\end{proof}

\begin{defn} For any two quadrics $f_1, f_2$ in $k[x_1,\cdots,x_n]$, define \textbf{minrank$(f_1,f_2)$} to be the lowest rank of a nontrivial $k$-linear combination of $f_1$ and $f_2$. Note that \textbf{minrank$(f_1,f_2)=0$} if and only if $f_1$ and $f_2$ are $k$-linear dependent.
\end{defn}

\begin{thm} \label{n32} If $f_1,f_2,f_3\in k[x_1,\cdots,x_n]$ are polynomials of degree 2 and collective strength $\geq 2$, then they form a regular sequence. i.e. $N(3,2)\leq 2$.
\end{thm}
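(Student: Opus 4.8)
The plan is to convert the strength hypothesis into a rank condition and then reduce everything to the primality of the ideal generated by two of the forms. Since $k$ is algebraically closed, write $f_i(x)=x^{T}A_ix$ for symmetric matrices $A_1,A_2,A_3$. Extending Remark \ref{rank_strength} to odd rank, if a quadric is written as a sum of $t$ products of linear forms then its rank is at most $2t$, so its strength equals $\lceil \operatorname{rank}/2\rceil-1$; in particular a quadric has strength $\geq 2$ exactly when its rank is $\geq 5$. Thus collective strength $\geq 2$ means that every nonzero combination $c_1A_1+c_2A_2+c_3A_3$ has rank $\geq 5$. Two immediate consequences: the $f_i$ are linearly independent (a dependence would give a form of strength $-1$), and each combination, having rank $\geq 3$, is irreducible.

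By Proposition \ref{reg}, $f_1,f_2,f_3$ is a regular sequence iff $I=\langle f_1,f_2,f_3\rangle$ has codimension $3$, a property of $I$ alone; so I may replace the $f_i$ by any basis of their span. My target is to show that some pencil $\langle f_1,f_2\rangle$ inside the net generates a \emph{prime} ideal. Granting this, $R/\langle f_1,f_2\rangle$ is a domain; the degree-$2$ part of $\langle f_1,f_2\rangle$ is exactly $\operatorname{span}_k(f_1,f_2)$, so by linear independence $f_3\notin\langle f_1,f_2\rangle$, making $f_3$ a nonzero (hence nonzerodivisor) element of the domain. Combined with the fact that $f_1,f_2$ are distinct irreducibles and so already form a regular sequence (codimension $2$), this yields a regular sequence and hence $N(3,2)\leq 2$.

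For primality I invoke Lemma \ref{prime}: it suffices that the singular locus of $X=V(f_1,f_2)\subseteq\mathbb{P}^{n-1}$ have codimension $>4$. Since $\nabla f_i(p)=2A_ip$, the Jacobian criterion for the complete intersection $X$ gives $\operatorname{Sing}(X)=\{p\in X: A_1p,A_2p \text{ linearly dependent}\}$. I would analyze this through the incidence variety $Z=\{([\lambda:\mu],p)\in\mathbb{P}^1\times\mathbb{P}^{n-1}:(\lambda A_1+\mu A_2)p=0\}$, whose fiber over $[\lambda:\mu]$ is $\mathbb{P}\bigl(\ker(\lambda A_1+\mu A_2)\bigr)$, of dimension $\leq n-6$ because every member of the pencil has rank $\geq 5$; hence $\dim Z\leq n-5$. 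On $Z$ one has the identity $\lambda f_1(p)+\mu f_2(p)=0$, so imposing the remaining condition $p\in X$ amounts to the vanishing of the single quadric $f_1|_{\ker}$ on each fiber. Whenever $f_1|_{\ker}\not\equiv 0$ this drops the fiber dimension to $\leq n-7$, forcing $\dim\operatorname{Sing}(X)\leq n-6$ and codimension $\geq 5$.

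The hard part is precisely the fibers on which $f_1$ (equivalently $f_2$) vanishes identically, i.e. where $\ker(\lambda A_1+\mu A_2)$ is totally isotropic for the pencil, since there the extra quadric buys nothing. Such a kernel has the minimal dimension $n-5$, so the bad parameters lie in $\{[\lambda:\mu]:\operatorname{rank}(\lambda A_1+\mu A_2)=5\}$, a closed subset of $\mathbb{P}^1$; if it is finite the bad fibers contribute dimension only $\leq n-6$ and the bound holds. The crux is therefore to rule out a one-parameter family of totally isotropic kernels. Such a family would force rank $\equiv 5$ along the pencil, and after quotienting by the common kernel $L=\ker A_1\cap\ker A_2$ one gets a pencil of constant rank $5$ with no common kernel; restricting to a $5$-dimensional subspace on which some member is nondegenerate, $\det(\lambda A_1'+\mu A_2')$ is a nonzero degree-$5$ form in $[\lambda:\mu]$ and, over the algebraically closed $k$, has a root, producing a member of rank $<5$ and contradicting minrank $\geq 5$. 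Discharging the general (no common kernel) shape of this degenerate pencil—by the Kronecker--Segre classification of symmetric pencils, or by first passing to a generic pencil in the net so that its generic member has rank $>5$ and the pathology cannot occur—is the delicate step I expect to demand the most care.
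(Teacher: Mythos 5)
Your proof shares the paper's overall frame (strength $\geq 2$ converts to ``every member of the net has rank $\geq 5$,'' then reduce to primality of $\langle f_1,f_2\rangle$ via Lemma \ref{prime}; your verification that $f_3\notin\langle f_1,f_2\rangle$ is in fact more careful than the paper's), but the technical core is genuinely different: the paper simultaneously diagonalizes $f_1,f_2$ and computes an explicit primary decomposition of the Jacobian ideal, proving $\operatorname{codim} J=\text{minrank}(f_1,f_2)$ (Lemma \ref{minrank}), whereas you bound the singular locus by a dimension count on the incidence variety $Z\subseteq\mathbb{P}^1\times\mathbb{P}^{n-1}$ of pairs (parameter, kernel vector). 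That count is correct as far as it goes, and it correctly isolates the single dangerous configuration: a pencil all of whose members have rank exactly $5$, with kernels totally isotropic for the pencil.

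Your handling of that configuration, however, is a genuine gap, and the one concrete argument you offer for it is wrong. A root $[\lambda_0:\mu_0]$ of $\det\bigl((\lambda A_1'+\mu A_2')|_W\bigr)$ only says that the restriction of $\lambda_0A_1'+\mu_0A_2'$ to the $5$-dimensional subspace $W$ is degenerate, i.e.\ $W\cap W^{\perp}\neq 0$ for that member; it does not produce a member of rank $<5$, since rank can drop under restriction. In fact your argument never uses symmetry or parity, so it would equally ``prove'' that constant-rank-$4$ symmetric pencils do not exist --- but they do: with $f_1=x_1z_1+x_2z_2$ and $f_2=y_1z_1+y_2z_2$, every nonzero member $z_1(\lambda x_1+\mu y_1)+z_2(\lambda x_2+\mu y_2)$ has rank exactly $4$, and its $2$-dimensional kernel is totally isotropic for the whole pencil (restricting to the span of $x_1,z_1,x_2,z_2$, where $A_2$ vanishes identically, exhibits the failure of your determinant step). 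What you actually need is the parity theorem: a pencil of complex symmetric matrices of constant rank has even rank, so constant rank $5$ is impossible. This is true and follows from the symmetric Kronecker--Segre canonical form (a regular part forces a rank drop at an eigenvalue; the singular symmetric blocks have size $2\epsilon_i+1$ and rank $2\epsilon_i$), or from a degree computation with the kernel bundle on $\mathbb{P}^1$, but you leave it as a black box; and your fallback of choosing a generic pencil in the net with generic rank $>5$ begs the question when every member of the net has rank exactly $5$. So the proposal is incomplete precisely at the step on which the whole argument hinges, a difficulty the paper's diagonalization route never encounters.
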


\begin{proof}
Since we are over an algebraically closed field of characteristic not equal to 2, \cite{Harris} Lemma 22.42 implies that we can simultaneously write $f_1, f_2$ in $k[x_1,\cdots,x_n]$ as $$f_1=a_1x_1^2+a_2x_2^2+\cdots+a_nx_n^2$$ $$f_2=b_1x_1^2+b_2x_2^2+\cdots+b_nx_n^2$$ 

It is enough to show that $\langle f_1,f_2\rangle$ is prime. This is because, if $\langle f_1,f_2\rangle$ is prime then $k[x_1, \cdots, x_n]/\langle f_1,f_2\rangle$ will have no zero divisors, and so $f_3$ will be a non-zero divisor modulo $\langle f_1,f_2\rangle$.  Hence, $f_1, f_2, f_3$ would be a regular sequence. Thus, our goal will be to show that the hypothesis on collective strength implies that $\langle f_1,f_2\rangle$ is a prime ideal.\\

Recall that the Jacobian ideal of $\langle f_1,f_2\rangle$ is generated by the $2\times 2$ minors of the Jacobian matrix of $\langle f_1,f_2\rangle$. It describes the singular locus. Now, if $a_i=b_i=0$ for some $i$, then we can simply reduce the number of variables.  So we can assume that either $a_i$ or $b_i$ is nonzero for all $i$.  Moreover, by replacing $f_1$ by an appropriate $k$-linear combination of $f_1$ and $f_2$, we can further assume that $a_i \neq 0$ for all $i$.  And finally, since we are over an algebraically closed field, we can rescale the $x_i$-coordinates so that $f_1=x_1^2+\cdots+x_n^2$ and $f_2=b_1x_1^2+\cdots+b_nx_n^2$. The Jacobian matrix is

\[\begin{bmatrix}
2x_1 & 2x_2 & \cdots & 2x_n\\
2b_1x_1& 2b_2x_2 & \cdots & 2b_nx_n\\
\end{bmatrix}.
\]

Let Jac$(f_1,f_2)=\langle 2\times 2 \text{ minors of Jacobian matrix} \rangle$ denote the Jacobian ideal of $f_1, f_2$. So $(i,j)$ minor $=(4b_j-4b_i)x_ix_j$. For simplicity, we denote $J=$ Jac $(f_1,f_2)$.\\

So we write 
\begin{eqnarray}
J &=& \langle 2\times 2 \text{ minors of Jacobian matrix}\rangle\nonumber\\
&=&\langle (b_2-b_1)x_1x_2,\cdots, (b_n-b_1)x_1x_n,\cdots, (b_n-b_{n-1})x_{n-1}x_n\rangle\nonumber\\
&=& \bigcap_{i=1}^n\langle x_1,x_2,\cdots,x_{i-1},x_{i+1},\cdots,x_n\rangle\qquad\cdots (5.1)\nonumber
\end{eqnarray}

Our goal is now to compute the codimension of $J$ and relate this to minrank$(f_1,f_2)$.  We will compute the codimension of $J$ by first producing an explicit primary decomposition of $J$.\\

Hence, to complete the proof of this theorem, it suffices to prove two auxiliary results.  First, we will show that codim $J$ equals minrank$(f_1,f_2)$, in Lemma \ref{minrank}.  Second, in Lemma \ref{coll_strength}, we will show that minrank$(f_1,f_2) \geq 4$ if the collective strength of $f_1, f_2, f_3$ is at least 2.  Then, by applying Lemma \ref{prime}, this will complete the proof.

\end{proof}

%
%
%

\begin{lemma}
\label{minrank} With notation as in Theorem \ref{n32}, we claim that codim $J$ = minrank$(f_1,f_2)$.
\end{lemma}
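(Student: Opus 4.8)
The plan is to show that both codim $J$ and minrank$(f_1,f_2)$ equal the same combinatorial quantity, namely $n-m$, where $m$ denotes the largest number of the scalars $b_1,\dots,b_n$ that coincide. First I would record the generators of $J$ explicitly: the $(i,j)$ minor of the Jacobian matrix is $4(b_j-b_i)x_ix_j$, so, discarding the nonzero scalar and the minors that vanish identically, $J=\langle x_ix_j : b_i\neq b_j\rangle$. The key structural fact is that this is a squarefree monomial ideal, which I will exploit when computing its codimension.

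For the minrank side, any nontrivial combination is $\alpha f_1+\beta f_2=\sum_{i=1}^n(\alpha+\beta b_i)x_i^2$, a diagonal quadratic form whose rank is exactly the number of nonzero coefficients $\alpha+\beta b_i$. Minimizing the rank amounts to maximizing the number of indices with $\alpha+\beta b_i=0$. If $\beta=0$ then $\alpha\neq 0$ and no coefficient vanishes, giving rank $n$; if $\beta\neq 0$ the vanishing indices are exactly those with $b_i=-\alpha/\beta$, so the optimal choice takes $-\alpha/\beta$ to be the most frequently repeated value among the $b_i$, killing $m$ of the coefficients. Hence minrank$(f_1,f_2)=n-m$.

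For the codimension of $J$, I would partition the index set $\{1,\dots,n\}$ into blocks $S_1,\dots,S_t$ according to the value of $b_i$, so that $b_i=b_j$ exactly when $i,j$ lie in a common block, with $|S_s|=m_s$ and $\max_s m_s=m$. A generator $x_ix_j$ of $J$ is present precisely when $i,j$ sit in distinct blocks, which forces the support of any point of $V(J)$ to lie inside a single block; thus $V(J)=\bigcup_{s=1}^t L_s$, where $L_s=V(\langle x_i : i\notin S_s\rangle)$ is the coordinate subspace spanned by $S_s$. Because $J$ is generated by squarefree monomials it is radical, so $J=\bigcap_{s=1}^t\langle x_i : i\notin S_s\rangle$ — this is exactly the decomposition displayed in $(5.1)$, generalized to allow repeated $b_i$. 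The prime $\langle x_i : i\notin S_s\rangle$ is generated by $n-m_s$ variables and hence has height $n-m_s$, so codim $J=\min_s(n-m_s)=n-m$. Combining with the previous paragraph yields codim $J=n-m=$ minrank$(f_1,f_2)$.

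The routine computations — the minor formula and the rank of a diagonal form — are immediate; the step that requires the most care is the decomposition $J=\bigcap_s\langle x_i : i\notin S_s\rangle$, and in particular the assertion that these are \emph{all} the minimal primes. The inclusion $J\subseteq\bigcap_s\langle x_i : i\notin S_s\rangle$ is clear, since any cross-block pair $x_ix_j$ omits at least one variable from each block's complement. For the reverse inclusion I expect the cleanest route is the geometric one above: identify $V(J)$ as the union of the block coordinate subspaces and invoke radicality of squarefree monomial ideals, rather than chasing a primary decomposition element by element. The one subtlety to flag is that $(5.1)$ as written presumes the $b_i$ are distinct (each block a singleton), so the general statement must first group the indices with equal $b_i$ before reading off the codimension.
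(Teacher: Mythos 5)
Your proof is correct, and it arrives at the same decomposition the paper uses: the paper also shows that $J$ equals the intersection of the block-complement coordinate primes (in the paper's notation, $I_t = \langle z_{a,b} \mid a \neq t\rangle$ after relabeling the variables into blocks according to the repeated coefficients of $f_2$), and then reads off codim $J = n - \max_s m_s = \text{minrank}(f_1,f_2)$. The difference is how the decomposition is justified. The paper argues both inclusions combinatorially on monomials: $J \subseteq \bigcap_t I_t$ by checking each generator $z_{i,a}z_{j,b}$ against each $I_t$, and $\bigcap_t I_t \subseteq J$ by noting that a monomial outside $J$ can involve variables from only one block and hence fails to lie in that block's $I_t$. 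You instead prove the easy inclusion directly and obtain the reverse one geometrically: identify $V(J)$ as the union of the block coordinate subspaces $L_s$, invoke radicality of squarefree monomial ideals, and apply the Nullstellensatz (legitimate here, since the ambient field is algebraically closed). Both routes are complete; the paper's monomial argument is more elementary and never leaves the monomial-ideal world, while yours is shorter, makes the geometry of $V(J)$ explicit, and generalizes verbatim to any squarefree monomial ideal. Two further points in your favor: your minrank computation is more careful than the paper's (you treat the $\beta=0$ case explicitly, where the paper merely asserts that the minimizing combination is the one killing the most coefficients), and your closing caveat is accurate --- display (5.1) in Theorem \ref{n32} is literally correct only when the $b_i$ are pairwise distinct, and it is exactly the block-grouping step (your partition $S_1,\dots,S_t$, the paper's $z_{i,j}$ relabeling) that repairs it in general.
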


\begin{proof}
The ideal $J$ depends on the number of pairs $b_i, b_j$ where $b_i = b_j$.  To streamline the computations about $J$, it will therefore be convenient to change notation in a way that focus on which coefficients of $f_2$ are the same.  So we suppose that coefficients of $f_2$ are $\{\alpha_1,\cdots, \alpha_r\}$ where the $\alpha_i$ are all distinct and $r\leq n$.  We let $\lambda_i$ denote the number of times that $\alpha_i$ appears as a coefficient of $f_2$. And we let $\mu_i = \sum_{j=1}^i \lambda_j$. After permuting the variables, we can then rewrite


$$f_2=\alpha_1(\sum_{j=1}^{\mu_1}x_j^2)+\alpha_2(\sum_{j=\mu_1+1}^{\mu_2}x_j^2)+\cdots+\alpha_r(\sum_{j=\mu_{r-1}+1}^{\mu_r}x_j^2).$$\\

It will be further convenient to relabel our variables in accordance with these $\lambda_i$.  So for each $i = 1,\cdots, r$, we introduce variables $z_{i, j}$ for $1 \leq j \leq \lambda_i$.\\

With this convention, we can now express the Jacobian matrix as:\\

\[\begin{bmatrix}
z_{1,1} & \cdots & z_{1,\lambda_1} & \vline & z_{2,1} & \cdots & z_{2,\lambda_2} & \vline & \cdots & \vline & z_{k,1} & \cdots  & z_{k,\lambda_k}\\
\alpha_1z_{1,1} & \cdots & \alpha_1z_{1,\lambda_1} & \vline & \alpha_2z_{2,1} & \cdots & \alpha_2z_{2,\lambda_2} & \vline & \cdots & \vline & \alpha_kz_{k,1} & \cdots & \alpha_kz_{k,\lambda_k}\\
\end{bmatrix}
\]\\

%
%
 
By (5.1), we now see that the Jacobian ideal $J$ can be expressed as:
$$J=\langle z_{i,m}z_{j,l} \mid i\neq j, 1\leq m\leq\lambda_i, 1\leq l\leq \lambda_j\rangle$$

Now we are ready to compute a primary decomposition of J.  We let $I_t$ be the prime ideal $I_t= \langle z_{a ,b} | a\neq t \rangle$.  And we let $$I = \bigcap_{t=1}^r I_t$$\\

We claim that $I=J$, first we prove $J\subseteq I$. Without loss of generality, prove that $z_{i,1}z_{j,1}\in I_t$ for all $t$.\\

Case 1: $t=i$\\

When $t=i$, we have $I_t=\langle z_{a,b}\mid a\neq i\rangle$ which means $z_{j,1}\in I_t$. Hence the product $z_{i,1}z_{j,1}$ is in $I_t$ for all $n$.\\

Case 2: $t=j$\\

When $t=j$, we have $I_t=\langle z_{a,b}\mid a\neq j\rangle$ which means $z_{i,1}\in I_t$. Hence the product $z_{i,1}z_{j,1}$ is in $I_t$ for all $t$.\\

Case 3: $t\neq i,j$\\

This is clear because both $z_{i,1}$ and $z_{j,1}$ are in $I_t$, so is their product.\\

Thus, we have proved that $J\subseteq I$.\\

Now we prove $I\subseteq J$. Indeed, if a monomial $m$ does not lie $J$, then $m$ must not divisible by any pair of variables $z_{i,a}$ and $z_{j,b}$ with $i\neq j$.  In other words, for some $i$, m must be a monomial only involving the variables $z_{i,1}, \cdots, z_{i,\lambda_i}$.  This means that $m \in \langle z_{i,1},\cdots, z_{i,\lambda_i}\rangle$. Then we have $m\not\in I_t=\langle z_{a,b}\mid a\neq t\rangle$, which implies that $m\not\in I$ as desired. Therefore, we have that $I=J$.\\

Hence, we have codim $J$ = codim $I=\min_t\{\text{codim }I_t\}$.  Finally, we need to compare minrank$(f_1,f_2)$ to codim $I$.  Given our special form for $f_1$ and $f_2$, the smallest rank of a quadric will be obtained by a $k$-linear combination of $f_1$ and $f_2$ which has the most coefficients equal to zero.  In other words, minrank$(f_1,f_2)$ is $n - \max {\lambda_t , 1\leq t \leq r}$.  But we note that $n - \lambda_t$ is precisely codim$(I_t)$, and we conclude that codim$(J) =$ minrank$(f_1,f_2)$ as desired.
\end{proof}

\begin{lemma}
\label{coll_strength} If collective strength of $f_1,f_2,f_3$ is at least 2 then minrank$(f_1,f_2)$ is at least 4.
\end{lemma}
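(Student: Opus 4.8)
The plan is to argue by contradiction through the contrapositive, reducing the statement about collective strength to the elementary relationship between the rank and the strength of a single quadric. Specifically, I would suppose that minrank$(f_1,f_2) \leq 3$ and then produce a nontrivial homogeneous $k$-linear combination of $f_1,f_2,f_3$ whose strength is at most $1$, contradicting the hypothesis that the collective strength is at least $2$.

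First I would record the key computation, which extends Remark \ref{rank_strength} from even ranks to the case of rank at most $3$: over an algebraically closed field of characteristic not $2$, any quadric $q$ of rank $r \leq 3$ has strength at most $1$. Writing $q$ in the normal form $X_1^2 + \cdots + X_r^2$ with the $X_j$ linearly independent linear forms (Definition \ref{quadratic}), and choosing a square root $i$ of $-1$, one has the factorization $X_1^2 + X_2^2 = (X_1 + i X_2)(X_1 - i X_2)$, so that $q = (X_1 + i X_2)(X_1 - i X_2) + X_3 X_3$ when $r = 3$, while $q$ is a single product (or a square) when $r \leq 2$. In every case $q$ is a sum of at most two products of positive-degree forms, so by Definition \ref{strength} its strength is at most $1$.

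Next I would invoke the definition of minrank. If minrank$(f_1,f_2) \leq 3$, then by definition there is a nontrivial pair $(c_1,c_2) \in k^2$ for which $g := c_1 f_1 + c_2 f_2$ is a quadric of rank at most $3$ (when minrank equals $0$ the forms $f_1,f_2$ are linearly dependent, so some such $g$ vanishes and has strength $-1$, which is handled identically). By the previous paragraph, strength$(g) \leq 1$. Since $g = c_1 f_1 + c_2 f_2 + 0\cdot f_3$ is a nontrivial homogeneous $k$-linear combination of $f_1,f_2,f_3$, the collective strength of $f_1,f_2,f_3$ is at most strength$(g) \leq 1$, contradicting the hypothesis. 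Hence minrank$(f_1,f_2) \geq 4$.

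I expect the only real content to lie in the rank-at-most-$3$ computation of the second step, which is where algebraic closedness (to factor $X_1^2 + X_2^2$) and the normal form of Definition \ref{quadratic} are used; the remaining steps are bookkeeping with the definition of collective strength. The one point to state carefully is that passing from the set $\{f_1,f_2\}$ to the larger set $\{f_1,f_2,f_3\}$ can only decrease the collective strength, since every nontrivial combination of $f_1,f_2$ is in particular a nontrivial combination of $f_1,f_2,f_3$.
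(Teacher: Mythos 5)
Your proof is correct and takes essentially the same route as the paper: the paper's entire proof of this lemma is the one-line citation of Remark \ref{rank_strength}, i.e.\ exactly the rank-to-strength relation that you use via the contrapositive. If anything, your writeup is more complete than the paper's, since Remark \ref{rank_strength} only addresses quadrics of even rank, and your factorization $X_1^2+X_2^2=(X_1+iX_2)(X_1-iX_2)$ explicitly handles the rank-$3$ case (and the degenerate minrank-$0$ case) that the paper leaves implicit.
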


\begin{proof}
This follows from Remark \ref{rank_strength}.
\end{proof}

\begin{prop} \label{nn32}There exist homogeneous polynomials $f_1,f_2,f_3$ with collective strength 1 that do not form a regular sequence. This implies $N(3,2)\geq 2$.
\end{prop}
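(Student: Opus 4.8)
The plan is to take the three maximal minors of a generic $3\times 2$ matrix, i.e.\ the $n=2$ case of the construction in Section 4. Explicitly, I would work in $k[a,b,c,d,e,g]$ with the matrix
\[
\begin{bmatrix} a & b \\ c & d \\ e & g \end{bmatrix},
\]
and let $f_1=cg-de$, $f_2=ag-be$, $f_3=ad-bc$ be its three $2\times 2$ minors, obtained by deleting each row in turn. By the Hilbert--Burch theorem the ideal $\langle f_1,f_2,f_3\rangle$ has codimension $2$, so by Proposition \ref{reg} these three quadrics are not a regular sequence. It then remains only to show that their collective strength is exactly $1$, which is the content that Section 4 deferred.

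That the collective strength is at most $1$ is immediate: each $f_i$ is a single $2\times 2$ determinant, hence a difference of two products of positive-degree forms, so $\mathrm{strength}(f_i)\le 1$ by Definition \ref{strength}. The real work is the reverse inequality, namely that every nontrivial combination $Q=\lambda_1 f_1+\lambda_2 f_2+\lambda_3 f_3$ has strength at least $1$, equivalently (again by Definition \ref{strength}) that $Q$ fails to factor as a product of two linear forms, equivalently that $Q$ has rank at least $3$.

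To control this rank I would exploit the bilinearity of the minors in the two columns $u=(a,c,e)$ and $v=(b,d,g)$ of the matrix, which gives $Q=u^{\top}Bv$ with
\[
B=\begin{bmatrix} 0 & \lambda_3 & \lambda_2 \\ -\lambda_3 & 0 & \lambda_1 \\ -\lambda_2 & -\lambda_1 & 0 \end{bmatrix}.
\]
Reordering the variables as $(u,v)$, the symmetric matrix of $Q$ is, up to a scalar, the block matrix with zero diagonal blocks and off-diagonal blocks $B$ and $B^{\top}$, whose rank equals $2\,\mathrm{rank}(B)$. Now $B$ is skew-symmetric of odd size $3$, hence singular; but it is nonzero whenever $(\lambda_1,\lambda_2,\lambda_3)\ne 0$, and a nonzero skew-symmetric matrix has even rank, so $\mathrm{rank}(B)=2$. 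Therefore $\mathrm{rank}(Q)=4$ for every nontrivial combination, and by Remark \ref{rank_strength} each such $Q$ has strength exactly $1$. Hence the collective strength equals $1$.

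The one delicate point — and the only place the argument could go wrong — is this rank computation, specifically the observation that the sign pattern inherited from the three minors makes $B$ genuinely skew-symmetric rather than an arbitrary $3\times 3$ matrix: it is precisely the forced even rank of a skew-symmetric matrix that prevents any combination from degenerating to rank $\le 2$ (equivalently, to strength $0$). Granting this, the quadrics $f_1,f_2,f_3$ have collective strength $1$ while failing to be a regular sequence, which yields $N(3,2)\ge 2$ as claimed.
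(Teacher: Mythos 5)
Your proposal is correct, and it uses the same example and the same overall strategy as the paper: the three $2\times 2$ minors of a generic $3\times 2$ matrix, non-regularity via Hilbert--Burch and Proposition \ref{reg}, and a rank computation showing every nontrivial combination has rank $4$, hence strength exactly $1$ by Remark \ref{rank_strength}. Where you genuinely differ is in how that rank is computed. The paper writes out the full $6\times 6$ symmetric Gram matrix of $Q=af_1+bf_2+cf_3$, invokes symmetry to assume $a\neq 0$, and row-reduces to echelon form to read off rank $\geq 4$. You instead exploit the bicolumn structure: writing $Q=u^{\top}Bv$ with $B$ the $3\times 3$ skew-symmetric matrix of coefficients, the Gram matrix becomes a block matrix of rank $2\,\mathrm{rank}(B)$, and the parity constraint on the rank of a nonzero alternating matrix forces $\mathrm{rank}(B)=2$, hence $\mathrm{rank}(Q)=4$. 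Your route buys several things: it needs no case analysis or choice of a nonzero coefficient, it makes visible \emph{why} no combination can degenerate to rank $\leq 2$ (the even-rank constraint, rather than an unexplained outcome of row reduction), and it generalizes immediately to the $(n+1)\times n$ setting of Section 4, where the analogous statement about minors of generic matrices would be much more painful to verify by explicit echelon forms. The paper's computation is more elementary in that it uses nothing beyond Gaussian elimination, but it is tied to the specific $6\times 6$ instance.
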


\begin{proof}
Consider the $3\times 2$ matrix
\[\begin{bmatrix}
x_{11} & x_{12}\\
x_{21} & x_{22}\\
x_{31} & x_{32}\\
\end{bmatrix}
\]\\
The minors of the matrix $$f_1=x_{21}x_{32}-x_{31}x_{22},\quad f_2=x_{11}x_{32}-x_{31}x_{12},\quad f_3=x_{11}x_{22}-x_{21}x_{12}.$$ do not form a regular sequence by the discussion in Section 4. So to complete the proof, we need to show that $f_1, f_2, f_3$ have collective strength 1.\\

Let $a(x_{21}x_{32}-x_{31}x_{22})+b(x_{11}x_{32}-x_{31}x_{12})+c(x_{11}x_{22}-x_{21}x_{12})$ be the linear combination of $f_1,f_2,f_3$ for some $a,b,c$ in a field $k$. If $(a,b,c)=(0,0,0)$, there's nothing to prove. So we can assume that at least one of $a,b,c$ is nonzero. By symmetry, without loss of generality we can assume that $a\neq 0$.\\

By definition, the collective strength of $f_1,f_2,f_3$ is the minimal strength of a non-trivial linear combination of $f_1,f_2,f_3$, so is obviously $\leq 1$. Want to prove that it is precisely 1, hence $N(3,2)\geq 2$.\\

The matrix of this quadratic polynomial is
 
\[\begin{bmatrix}
0 & 0 & 0 & 0 & a & b\\
0 & 0 & 0 & -a & 0 & c\\
0 & 0 & 0 & -b & -c & 0\\
0 &-a & -b & 0 & 0 & 0\\
a & 0 & -c & 0 & 0 & 0\\
b & c & 0 & 0 & 0 & 0\\
\end{bmatrix}.
\]\\
Since we have $a\neq 0$. Reducing to echelon form, we get
\[\begin{bmatrix}
1 & 0 & -\frac{c}{a} & 0 & 0 & 0\\
0 & 1 & \frac{b}{a} & 0 & 0 & 0\\
0 & 0 & 0 & 1 & 0 & -\frac{c}{a}\\
0 & 0 & 0 & 0 & 1 & \frac{b}{a}\\
0 & 0 & 0 & 0 & 0 & 0\\
0 & 0 & 0 & 0 & 0 & 0\\
\end{bmatrix}.
\]\\

So the rank is $\geq 4$, i.e. the rank of the linear combination of $f_1,f_2,f_3$ has rank $\geq 4$, which means its strength is $\geq 1$ by Remark \ref{rank_strength}. Therefore, it is exactly 1. Hence, $N(3,2)\geq 2$.\\
\end{proof}

%

Finally, combining Theorem \ref{n32} and Proposition \ref{nn32} proves that $N(3,2)=2$.\\


\section{Homogeneous Polynomials and Column Grading}

We introduce column grading in this section. And in Lemma \ref{homog} we prove that a cubic homogeneous polynomial of strength 1 can be written as a linear combination of homogeneous polynomials of the same strength in column grading.\\

\begin{defn}
We imagine the variables of the $\C\C[x_{11},\cdots , x_{43}]$ as corresponding to entries in a generic $4\times 3$ matrix:

\[M=\begin{bmatrix}
x_{11} & x_{12} & x_{13}\\
x_{21} & x_{22} & x_{23}\\
x_{31} & x_{32} & x_{33}\\
x_{41} & x_{42} & x_{43}\\
\end{bmatrix}
\]

We give this ring a $\Z^3$ grading by its columns.  Namely, $deg(x_{i1})=(1,0,0)$ for $1\leq i \leq 4$ and similarly $deg(x_{i2})=(0,1,0)$ and $deg(x_{i3})=(0,0,1)$.
\end{defn}

%


\begin{eg}
$(x_{11}+x_{23})$ is not homogeneous because $x_{11}$ and $x_{23}$ have different degree. They are $\deg(x_{11})=(1,0,0)$ for $1\leq i \leq 4$ and $\deg(x_{23})=(0,0,1)$.
\end{eg}

\begin{eg}
If we choose degree $(1,1,0)$ then $S_{(1,1,0)}$ has a basis consisting of products $x_{i1}x_{j2}$ for any $1\leq i \leq 4$ and any $1\leq j \leq 4$.
\end{eg}

In the following section, we will be interested in studying the strength of $3\times 3$ minors of the above matrix.  We note that any such determinant has degree $(1,1,1)$ in this grading.\\

%
%


Next, we turn our attention to analyzing ideals generated by a pair of linear forms, up to appropriate symmetries, as this will be used in Section 7.

\begin{lemma}
Up to the natural $GL_4\times GL_3$ -action on $\C[x_{11}, … , x_{43}]$, any ideal I which is homogeneous in the $\Z^3$ grading, and which is generated by a pair of linearly independent linear forms, is equivalent to one of the following:
\begin{itemize}
\item $(x_{11},x_{21})$
\item $(x_{11},x_{12})$
\item $(x_{11},x_{22})$
\end{itemize}
\end{lemma}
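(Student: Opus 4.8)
The plan is to reduce the statement to a linear-algebra classification of the two-dimensional space of linear forms $V=I_1$ spanned by the two generators. The first step is to record that, because $I$ is homogeneous for the $\Z^3$-grading, $V$ is itself a graded subspace. Writing $L_j\cong\C^4$ for the space of linear forms supported in the $j$-th column (so $\deg L_1=(1,0,0)$, etc.), every total-degree-one element of the ring has multidegree $(1,0,0)$, $(0,1,0)$ or $(0,0,1)$ exactly when it is supported in a single column, and intersecting the multigraded ideal $I$ with each piece gives $V=(V\cap L_1)\oplus(V\cap L_2)\oplus(V\cap L_3)$. Hence $I$ admits two homogeneous generators, each a linear combination of the variables of a single column, which I may regard as a nonzero coefficient vector in $\C^4$.

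Since $\dim V=2$, the dimensions can be distributed among the three columns in only two ways: both in one column ($2+0+0$), or one in each of two distinct columns ($1+1+0$). I would then use the two group factors for different purposes. The $GL_3$ factor acts by column operations and in particular contains the permutation matrices that reorder the three columns (these preserve homogeneity, merely permuting the graded pieces $L_1,L_2,L_3$), so I can always move the occupied column(s) to column $1$, and in the second case the two occupied columns to columns $1$ and $2$. The $GL_4$ factor acts by row operations, applying a single $A\in GL_4$ simultaneously to the coefficient vectors of all columns.

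The case analysis is then short. In the $2+0+0$ case $V$ is a two-dimensional subspace of $L_1\cong\C^4$, and choosing $A\in GL_4$ that carries a basis of $V$ to the standard vectors $e_1,e_2$ normalizes $I$ to $(x_{11},x_{21})$. In the $1+1+0$ case $V=\langle\ell_1,\ell_2\rangle$ with $\ell_1$ supported in column $1$, $\ell_2$ in column $2$, and coefficient vectors $c,d\in\C^4$; the only invariant surviving the simultaneous $GL_4$-action is whether $c$ and $d$ are linearly dependent. If $c,d$ are proportional, an $A$ with $Ac=e_1$ sends the pair to $(x_{11},x_{12})$ (the resulting scalar on the second generator does not change the ideal). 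If $c,d$ are independent, an $A$ with $Ac=e_1$ and $Ad=e_2$ sends the pair to $(x_{11},x_{22})$. This exhausts all possibilities and produces exactly the three listed representatives.

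The main point requiring care is the first step: justifying that a homogeneous ideal generated by two linearly independent linear forms genuinely has homogeneous generators, and that $V$ splits cleanly across the columns, while keeping track of which group operations preserve the grading (row operations and column permutations or scalings do, whereas general column operations do not) so that the final normal forms are honestly homogeneous. Once this splitting is in place, the remaining case analysis is routine.
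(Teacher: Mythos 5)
Your proposal is correct and follows essentially the same route as the paper: reduce to generators that are homogeneous in the column grading, then case-split on which columns support the two generators and normalize with row operations and column permutations to arrive at the same three representatives $(x_{11},x_{21})$, $(x_{11},x_{12})$, $(x_{11},x_{22})$. The one genuine improvement is your first step, where you justify that a $\Z^3$-homogeneous ideal generated by two linear forms really does admit column-homogeneous generators via the decomposition $V=(V\cap L_1)\oplus(V\cap L_2)\oplus(V\cap L_3)$; the paper's proof simply assumes this from the outset.
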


\begin{proof}

%

We start with an ideal generated by two linearly independent linear forms $\ell_1, \ell_2$, each of which is homogeneous in the column grading.  Without loss of generality, we can assume that $ \ell_1 = x_{11}$.  If $\deg(\ell_2) = \deg(\ell_1)$ then we can assume that $\ell_2 = x_{21}$, which is the first case listed in the lemma.  If $deg(\ell_2) \ne \deg(\ell_1)$ then we can assume that $\ell_2 = x_{i2}$ for some i.  Again there are two cases: if $i=1$ then we are in one the second case listed in the lemma.  If $i\ne 1$, then we can assume that $i =2$ and our forms are $x_{11}$ and $x_{22}$ and so we are in the third case of the lemma.\\

%
%
%
%

Hence, when fixing $x_{11}$ there are three equivalent classes with the following representatives: $$\{(x_{11},x_{21}), (x_{11}, x_{22}), (x_{11},x_{12})\}.$$

\end{proof}

This lemma will be used in proving $N(3,3)>2$ where we apply column grading.\\


\section{The Bound on Collective Strength when $r=3$ and $d=3$}

In this section, we prove that there exist cubic homogeneous polynomials $f_1,f_2,f_3$ with collective strength 2 that is not a regular sequence. i.e. $N(3,3)>2$.\\

As mentioned in Section 4, the $n\times n$ minors of the $(n+1) \times n$ matrix are not a regular sequence. So let

\[M=\begin{bmatrix}
x_{11} & x_{12} & x_{13}\\
x_{21} & x_{22} & x_{23}\\
x_{31} & x_{32} & x_{33}\\
x_{41} & x_{42} & x_{43}\\
\end{bmatrix}
\] be a $4\times 3$ matrix.\\

And let 
\[\Delta_1=\begin{bmatrix}
x_{21} & x_{22} & x_{23}\\
x_{31} & x_{32} & x_{33}\\
x_{41} & x_{42} & x_{43}\\
\end{bmatrix},\quad
\Delta_2=\begin{bmatrix}
x_{11} & x_{12} & x_{13}\\
x_{31} & x_{32} & x_{33}\\
x_{41} & x_{42} & x_{43}\\
\end{bmatrix}
\]\\
\[\Delta_3=\begin{bmatrix}
x_{11} & x_{12} & x_{13}\\
x_{21} & x_{22} & x_{23}\\
x_{41} & x_{42} & x_{43}\\
\end{bmatrix},\quad
\Delta_4=\begin{bmatrix}
x_{11} & x_{12} & x_{13}\\
x_{21} & x_{22} & x_{23}\\
x_{31} & x_{32} & x_{33}\\
\end{bmatrix}
\]
be $3\times 3$ submatrices of $M$ obtained by deleting $i^{th}$ row. Then $f_1=\text{det}\Delta_1, f_2=\text{det}\Delta_2, f_3=\text{det}\Delta_3$ are $3\times 3$ minors and $f_1,f_2,f_3$ are homogeneous polynomials of degree 3 that is not a regular sequence.\\

By using Laplace expansion, we have:
\[
\Delta_1 = x_{21}(x_{32}x_{43} - x_{33}x_{42}) - x_{22}(x_{31}x_{43} - x_{33}x_{41}) + x_{23}(x_{31}x_{42} - x_{32}x_{41}).
\]
It follows that the strength of $\Delta_1$ is at most $2$ and thus the collective strength of $f_1,f_2$, and $f_3$ is also at most $2$. We want to prove that the collective strength is not 1, hence it is 2. Suppose it is 1, then $\Delta$ can be written as $\Delta=ab+cd$, where $a,c$ are linear and $b,d$ are quadrics. Note that each of $f_1,f_2,f_3$ is a cubic homogeneous polynomial of degree $\begin{pmatrix}
1\\
1\\
1\\
\end{pmatrix}$ in column grading, so is their linear combination $\Delta$.\\

Since $a$ and $c$ are linear, they are linear combination of degree $\begin{pmatrix}
1\\
0\\
0\\
\end{pmatrix}$, $\begin{pmatrix}
0\\
1\\
0\\
\end{pmatrix}$ and $\begin{pmatrix}
0\\
0\\
1\\
\end{pmatrix}$ in column grading. So $a$ can be written as $a=a_{100}+a_{010}+a_{001}$ where $a_{100}$ denotes the combination of terms from column 1,  $a_{010}$ denotes the combination of terms from column 2, $a_{001}$ denotes the combination of terms from column 3. Similarly, $c=c_{100}+c_{010}+c_{001}$.\\

Likewise, since $b,d$ are quadric, they are linear combination of degree $\begin{pmatrix}
2\\
0\\
0\\
\end{pmatrix}$, $\begin{pmatrix}
0\\
2\\
0\\
\end{pmatrix}$, $\begin{pmatrix}
0\\
0\\
2\\
\end{pmatrix}$, $\begin{pmatrix}
1\\
1\\
0\\
\end{pmatrix}$, and $\begin{pmatrix}
0\\
1\\
1\\
\end{pmatrix}$, $\begin{pmatrix}
1\\
0\\
1\\
\end{pmatrix}$ in column grading, denoted by $b_{200},b_{020},b_{002},b_{110},b_{101}, b_{011}$. Hence, $b$ can be written as $b=b_{200}+b_{020}+b_{002}+b_{110}+b_{101}+b_{011}$. Similarly, $d=d_{200}+d_{020}+d_{002}+d_{110}+d_{101}+d_{011}$.\\

Furthermore, since $\Delta=ab+cd$ can be seen as an ideal generated by two linear elements $a, c$, where $a=a_{100}+a_{010}+a_{001}, c=c_{100}+c_{010}+c_{001}$ i.e. $\Delta=\langle a,c\rangle$. By multiplying a suitable invertible $2\times 2$ matrix, $\Delta$ can be re-written as $\Delta=\langle a,a+c\rangle$ for example. Hence, without loss of generality, we can assume that $a_{100}\neq c_{100}$ and both nonzero. In fact, we can apply to other degrees, such as $a_{010}, c_{010}$ and $a_{001}, c_{001}$. That means, fixing $a_{100}$ and $c_{100}$ with $a_{100}\neq c_{100}$ and both nonzero, through a suitable linear transformations we can assume that the other two pairs $(a_{010}, c_{010})$, $(a_{001},c_{001})$ in $a$ and $c$ are either both zero or both nonzero and non-equal.\\

\begin{eg} If $a=a_{100}+a_{010}$, $c=c_{100}$, where $a_{100}\neq c_{100}$. Then we can make $(a,c)$ to be nonzero on degrees $\begin{pmatrix}
1\\
0\\
0\\
\end{pmatrix}$ and $\begin{pmatrix}
0\\
1\\
0\\
\end{pmatrix}$ for both $a$ and $c$ by multiplying an invertible matrix
\[
\begin{pmatrix}
1 & 0\\
q & 1\\
\end{pmatrix}
\]
where a general choice of $q$ will have the desired properties as below:
i.e. $$
\begin{pmatrix}
1 & 0\\
q & 1\\
\end{pmatrix}
\cdot
\begin{pmatrix}
a_{100}+a_{010}\\
c_{100}\\
\end{pmatrix}
=
\begin{pmatrix}
a_{100}+a_{010}\\
q(a_{100}+a_{010})+c_{100}\\
\end{pmatrix}
$$

\end{eg}

\begin{eg} If $a=a_{100}+a_{001}$, $c=c_{100}+c_{010}$, where $a_{100}\neq c_{100}$. Then we can make $(a,c)$  nonzero in all degrees by multiplying by a general $2\times 2$ matrix to obtain the desired effect.
%
%

Thus, if at least one of $a$ and $c$ is nonzero in some certain degree, say $\begin{pmatrix}
1\\
0\\
0\\
\end{pmatrix}$, we can always make both $a$ and $c$ nonzero for that degree via some matrix transformation. Obviously, this won't work if both $a$ and $c$ are zero in that degree. Therefore, for each pair of column degree: $(a_{100}, c_{100})$, $(a_{010}, c_{010})$, $(a_{001},c_{001})$ we can assume that they are either both zero, or both nonzero and non-equal.
\end{eg}

\begin{lemma}\label{homog} Let $\Delta$ be a form of degree $\begin{pmatrix}
1\\
1\\
1\\
\end{pmatrix}$ in column grading. Assume that $\Delta$ has strength one, that is, we can write $\Delta=ab+cd$, where $a,c$ are linear (in the standard grading) and $b,d$ are quadric (in the standard grading). Then we can find $a^\prime,b^\prime,c^\prime,d^\prime$ all homogeneous in column grading, such that $\Delta=a^\prime b^\prime +c^\prime d^\prime$.
\end{lemma}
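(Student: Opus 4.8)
The plan is to exploit the fact that $\Delta$ is concentrated in the single column-degree $(1,1,1)$. Writing $a=\sum_e a_e$, $c=\sum_e c_e$ (with $e$ ranging over the linear column-degrees $(1,0,0),(0,1,0),(0,0,1)$) and $b=\sum_f b_f$, $d=\sum_f d_f$ (with $f$ ranging over the six quadric column-degrees), the product $ab+cd$ is a priori spread over all column-degrees of total weight $3$. Since $\Delta$ lives only in degree $(1,1,1)$, every other graded piece of $ab+cd$ must vanish. First I would record the degree-$(1,1,1)$ identity
\[
\Delta=a_{100}b_{011}+a_{010}b_{101}+a_{001}b_{110}+c_{100}d_{011}+c_{010}d_{101}+c_{001}d_{110},
\]
together with the nine vanishing relations coming from the remaining weight-$3$ degrees $(3,0,0),(0,3,0),(0,0,3)$, $(2,1,0),(1,2,0),(2,0,1),(1,0,2),(0,2,1),(0,1,2)$; for instance degree $(2,0,1)$ gives $a_{100}b_{101}+a_{001}b_{200}+c_{100}d_{101}+c_{001}d_{200}=0$.

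The engine of the proof is a coprimality observation inside the UFD $\C[x_{11},\dots,x_{43}]$: if $\alpha,\gamma$ are linearly independent (hence coprime) column-homogeneous linear forms and $\alpha B+\gamma D=0$ for column-homogeneous $B,D$, then $B=\gamma E$ and $D=-\alpha E$ for a column-homogeneous form $E$. Because each column block $\C[x_{1j},\dots,x_{4j}]$ is itself a polynomial ring in which distinct linear forms are coprime, the vanishing relations force the quadric components $b_f,d_f$ to factor through the linear forms $a_e,c_e$. Before doing this I would use the two available symmetries freely: the $GL_2$ action replacing $(a,c)$ by an invertible combination (with the contragredient action on $(b,d)$, which leaves $\Delta=ab+cd$ unchanged), and the $GL_4$ action by row operations, which preserves the column grading since it mixes only variables within a single column. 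Using the normalization set up before the lemma, I reduce to the case where $a$ and $c$ share a common support $S\subseteq\{(1,0,0),(0,1,0),(0,0,1)\}$ with matched components in each active degree, and then argue by cases on $|S|$.

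The cases $|S|\le 1$ are immediate: the degree-$(1,1,1)$ identity already consists of at most two column-homogeneous products. The case $|S|=2$, say $S=\{(1,0,0),(0,1,0)\}$, is the model for the whole argument. Here $a_{001}=c_{001}=0$, so the relations in degrees $(2,0,1)$ and $(0,2,1)$ collapse to $a_{100}b_{101}+c_{100}d_{101}=0$ and $a_{010}b_{011}+c_{010}d_{011}=0$, and the engine (applied to the coprime pairs $a_{100},c_{100}$ and $a_{010},c_{010}$) yields $b_{101}=c_{100}\omega_1$, $d_{101}=-a_{100}\omega_1$ and $b_{011}=c_{010}\omega_2$, $d_{011}=-a_{010}\omega_2$ for linear forms $\omega_1,\omega_2$ of column-degree $(0,0,1)$. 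Substituting into the degree-$(1,1,1)$ identity collapses all four terms into the single product
\[
\Delta=(\omega_1-\omega_2)\,(a_{010}c_{100}-a_{100}c_{010}),
\]
so in fact $\Delta$ is a single column-homogeneous product (strength $0$, with $a'=\omega_1-\omega_2$ linear and $b'=a_{010}c_{100}-a_{100}c_{010}$ quadric), a special case of the desired conclusion.

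The genuine obstacle is the case $|S|=3$ with all three projections of $\langle a,c\rangle$ of full rank, so that neither $a$ nor $c$ can be made to drop a column-degree. Here the pure quadric components $b_{200},b_{020},b_{002}$ (and their $d$-counterparts) enter the off-diagonal relations and couple them together, so the clean one-step factorization of the $|S|=2$ case no longer applies directly. The plan is to bootstrap: first solve the three extreme relations $(3,0,0),(0,3,0),(0,0,3)$ to write $b_{200}=c_{100}p_1$, etc.; feed these into the six mixed relations to obtain two competing expressions for each of $b_{110},b_{101},b_{011}$; and then use the resulting rank-one tensor identities in $V_i\otimes V_j$ to reconcile them, expressing every mixed quadric component as a column-homogeneous combination of the $a_e,c_e$ and one auxiliary linear form per column. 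Substituting back should again collapse the six terms of the degree-$(1,1,1)$ identity into at most two column-homogeneous products. Carrying out this chain of coprimality factorizations consistently—while tracking signs and the degenerate sub-cases where a pair $(a_e,c_e)$ is proportional rather than independent—is where essentially all the work lies.
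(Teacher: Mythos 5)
Your setup is identical to the paper's: the same decomposition of $a,b,c,d$ into column-graded pieces, the same degree-$(1,1,1)$ identity with nine vanishing relations in the other weight-$3$ degrees, and the same coprimality engine (this is exactly the paper's Claim 1.1: from $x_{11}b_{200}+x_{21}d_{200}=0$ one gets $b_{200}=x_{21}B$, $d_{200}=-x_{11}B$). Your $|S|\le 1$ case is fine, and your $|S|=2$ computation when both active pairs are coprime is correct and even sharper than the paper's treatment: the explicit collapse $\Delta=(\omega_1-\omega_2)(a_{010}c_{100}-a_{100}c_{010})$ exhibits $\Delta$ as a single column-homogeneous product. The problem is that the two cases carrying essentially all of the content of the lemma are not proved. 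Your $|S|=3$ case is a plan, not an argument: ``solve the extreme relations, feed into the mixed relations, reconcile the rank-one identities, substituting back \emph{should} again collapse'' is precisely the part that needs to be carried out, since it is where $b_{200},b_{020},b_{002}$ couple the relations together. In the paper this is Case 1 (Claims 1.1--1.4): as soon as \emph{one} pair, say $(a_{100},c_{100})$, is linearly independent, row operations make it $(x_{11},x_{21})$, the cascade of factorizations forces $b_{020}=d_{020}=b_{002}=d_{002}=0$ and $b_{110}=c_{010}B$, $d_{110}=-a_{010}B$, $b_{101}=c_{001}B$, $d_{101}=-a_{001}B$, and then all cross terms cancel in the $(1,1,1)$ identity, leaving $\Delta=x_{11}b_{011}+x_{21}d_{011}$. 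Note that this argument never needs the other two pairs to be independent, which is why the paper splits cases on independence of a single pair rather than on the support $S$.

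The second, more structural, gap is the degenerate situation you defer to the end: some pair $(a_e,c_e)$ proportional rather than independent. This cannot be normalized away inside your framework, because the $GL_2$ action preserves the span of $(a_e,c_e)$ in each column degree; if that span is a line, no invertible change of basis makes the pair coprime --- you can only make one entry zero, which destroys your ``matched support'' normalization. A concrete input like $a=x_{11}+x_{12}$, $c=x_{21}+2x_{12}$ has matched supports with $(a_{100},c_{100})$ independent but $(a_{010},c_{010})$ proportional, and your $|S|=2$ engine step simply does not apply to the second pair. Worse, when \emph{every} pair is proportional there is no coprime pair at all, and a genuinely different maneuver is needed: this is the paper's Case 2, where one uses the $GL_2$ action to arrange $c_{100}=0$, writes $c_{010}=\lambda_2a_{010}$, $c_{001}=\lambda_3a_{001}$, extracts $b_{110}+\lambda_2d_{110}=a_{100}f$ from the degree-$(1,2,0)$ relation, and regroups $\Delta$ into the form $a'b'+c'd'$. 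Since you acknowledge these sub-cases as ``where essentially all the work lies'' but do not supply the work, the proposal proves the lemma only on a proper subset of its hypotheses.
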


\begin{proof}
Since $a,c$ are linear and $b,d$ quadric, by discussion above we can write $$a=a_{100}+a_{010}+a_{001}$$ $$c=c_{100}+c_{010}+c_{001}$$ $$b=b_{200}+b_{020}+b_{002}+b_{110}+b_{101}+b_{011}$$ $$d=d_{200}+d_{020}+d_{002}+d_{110}+d_{101}+d_{011}$$ Since $\Delta$ has degree $(1,1,1)$ in the column grading, we have that:
\begin{eqnarray}
\Delta &=& ab+cd\nonumber\\
&=&(a_{100}+a_{010}+a_{001})(b_{200}+b_{020}+b_{002}+b_{110}+b_{101}+b_{011})\nonumber\\
&+& (c_{100}+c_{010}+c_{001})(d_{200}+d_{020}+d_{002}+d_{110}+d_{101}+d_{011})\nonumber\\
&=& a_{100}b_{011}+a_{010}b_{101}+a_{001}b_{110}+c_{100}d_{011}+c_{010}d_{101}+c_{001}d_{110}\nonumber
\end{eqnarray}

with the following equations
$$a_{100}b_{200}+c_{100}d_{200}=0\quad\cdots (1)$$
$$a_{100}b_{020}+a_{010}b_{110}+c_{100}d_{020}+c_{010}d_{110}=0\quad\cdots (2)$$
$$a_{100}b_{002}+a_{001}b_{101}+c_{100}d_{002}+c_{001}d_{101}=0\quad\cdots (3)$$
$$a_{010}b_{200}+a_{100}b_{110}+c_{010}d_{200}+c_{100}d_{110}=0\quad\cdots (4)$$
$$a_{010}b_{020}+c_{010}d_{020}=0\quad\cdots (5)$$
$$a_{010}b_{002}+a_{001}b_{011}+c_{010}d_{002}+c_{001}d_{011}=0\quad\cdots (6)$$
$$a_{001}b_{200}+a_{100}b_{101}+c_{001}d_{200}+c_{100}d_{101}=0\quad\cdots (7)$$
$$a_{001}b_{002}+c_{001}d_{002}=0\quad\cdots (8)$$
$$a_{001}b_{020}+a_{010}b_{011}+c_{001}d_{020}+c_{010}d_{011}=0\quad\cdots (9)$$\

We separate the proof into two major cases: Case 1 is where $a_{100}$, $c_{100}$ are linearly independent and Case 2 is where $a_{100}$, $c_{100}$ are linearly dependent.\\

\textbf{Case 1}: Assume that $a_{100}$, $c_{100}$ are linearly independent. Then WLOG $a_{100}= x_{11}$ and $c_{100} = x_{21}$. We will prove the lemma in this case by first proving a series of subclaims.\\

\textit{Claim 1.1}: There exists $B$ such that $b_{200} = x_{21}B$ and $d_{200} = -x_{11}B$.\\

Proof of Claim 1.1\\

Let $a_{100}=x_{11}, c_{100}=x_{21}$, then (1) becomes $$x_{11}b_{200}+x_{21}d_{200}=0$$ If $b_{200}=0$, then $d_{200}=0$ or vice versa, then there's nothing to prove and we move to Case 2. So we assume that $b_{200}\neq 0$ and $d_{200}\neq 0$. Thus, from (1) $b_{200}$ must have a factor $x_{21}$, i.e. $b_{200}=x_{21}B$ for some linear term $B$ from the first column. Similarly, $d_{200}=x_{11}D$ for some $D$ from the first column. So we can write
\begin{eqnarray}
0&=& x_{11}x_{21}B+x_{21}x_{11}D=0\quad\cdots (1)\nonumber\\
&=& x_{11}x_{21}(B+D)\nonumber
\end{eqnarray}
which implies that $B+D=0$, or $B=-D$. Hence, we proved that there exists $B$ such that $b_{200} = x_{21}B$ and $d_{200} = -x_{11}B$.\\

\textit{Claim 1.2} This further implies $b_{110}=c_{010}B$ and $d_{110}=-a_{010}B$ for some homogeneous $c_{010}$ and $a_{010}$ of degree $(0,1,0)$.\\

Proof of Claim 1.2\\

Substitute $b_{200}=x_{21}B$ and $d_{200}=x_{11}D$ into (4)\\

We have 
\begin{eqnarray}
0&=& a_{010}b_{200}+a_{100}b_{110}+c_{010}d_{200}+c_{100}d_{110}\nonumber\\
&=& a_{010}x_{21}B+x_{11}b_{110}+c_{010}x_{11}D+x_{21}d_{110}\nonumber\\
&=& x_{11}(b_{110}+c_{010}D)+x_{21}(d_{110}+a_{010}B)\nonumber
\end{eqnarray}

If two terms are canceled with each other, $b_{110}+c_{010}D$ must have the factor $x_{21}$ and $a_{010}B+d_{110}$ has factor $x_{11}$. This implies that $D=x_{21}$ and $B=x_{11}$ which is a contradiction since $B+D\neq 0$. So we must have $$b_{110}=c_{010}B,\quad d_{110}=-a_{010}B$$

\textit{Claim 1.3} This further implies $b_{020}=d_{020}=0$.\\

Proof of Claim 1.3\\

Plugging the result of Claim 1.2 into (2) we have 
\begin{eqnarray}
0&=& a_{100}b_{020}+a_{010}b_{110}+c_{100}d_{020}+c_{010}d_{110}\nonumber\\
&=& x_{11}b_{020}-a_{010}c_{010}D+x_{21}d_{020}-c_{010}a_{010}B\nonumber\\
&=& x_{11}b_{020}+x_{21}d_{020}-a_{010}c_{010}(D+B)\nonumber\\
&=& x_{11}b_{020}+x_{21}d_{020}\nonumber
\end{eqnarray}

So we must have $b_{020}=d_{020}=0$.\\

$a_{010}=c_{010}=0$\\

This means that $d_{020}$ must be a multiple of $x_{11}$ but that is impossible because of the column grading. Thus $d_{020}=0$ and similarly $b_{020}= 0$.\\

\textit{Claim 1.4} This further implies that $b_{101}=c_{001}B$ and $d_{101}=-a_{001}B$ for some homogeneous $c_{001}$ and $a_{001}$ of degree $(0,0,1)$ and that $b_{002}=c_{002}=0$.\\

Proof of Claim 1.4\\

The same arguments as in the proofs of Claim 1.2 and 1.3 apply here.\\

Thus, from our initial assumption that $a_{100}$, $c_{100}$ were linearly independent, Claims 1.2 and 1.4 allow us to reduce to the case where $a=a_{100}=x_{11}$ and $c=c_{100}=x_{21}$. Making this substitution into the expression for $\Delta$ in terms of the $a_{ijk}$ and so on, we then have $$\Delta= a_{100}b_{011}+c_{100}d_{011}$$ and this yields the desired expression for $\Delta$ which is homogeneous in the column grading.\\

We have thus completely handled Case 1, which is where $a_{100}$ and $c_{100}$ are linearly independent. By symmetry we have also handled the cases where $a_{010}$ and $c_{010}$ are linearly independent or when $a_{001}$ and $c_{001}$. So we have reduced to the case where each of these pairs is linearly dependent.\\

\textbf{Case 2} We have that $a_{100}$, $c_{100}$ are linearly dependent, and similarly for $a_{010}$, $c_{010}$ and $a_{001}$, $c_{001}$. By choosing a different basis for the vector space spanned by $a$ and $c$ (and altering $b$ and $d$ accordingly), we can assume that there exist scalars $\lambda_1$, $\lambda_ 2$, $\lambda_3$ such that $$c_{100}=\lambda_1 a_{100}\text{ and }c_{010}=\lambda_2 a_{010}\text{ and } c_{001}=\lambda_3 a_{001}.$$

Moreover, by replacing $c$ by $c-\lambda_1a$ (and again altering $b$ and $d$ accordingly), we can also assume that $\lambda_1=0$ and thus $c_{100}=0$.\\

Substituting these expressions into our equations for $\Delta$ and equation (2) from above, we get $$\Delta = a_{100}b_{011}+a_{010}(b_{101}+\lambda_2 d_{101})+a_{001}(b_{11}+\lambda_3 d_{110})$$
$$0=a_{100}b_{020}+a_{010}(b_{110} +\lambda_2d_{110})$$ If any of $a_{100}$, $a_{010}$ or $(b_{110}+\lambda_2d_{110})$ is zero, then substituting this into the expression for $\Delta$ gives an expression $\Delta= a'b' + c'd'$ of the desired form. If these are all nonzero, then we can write $$(b_{110} + \lambda_2d_{110}) = a_{100}f$$ for some $f$ of degree $(0,1,0)$. Substituting this into the expression for $\Delta$, we have
\begin{eqnarray}
\Delta &=& a_{100}b_{011}+a_{010}(b_{101}+\lambda_2d_{101})
+a_{001}(b_{110}+\lambda_3d_{110})\nonumber\\
&=& a_{100}b_{011}+a_{010}(b_{101}+\lambda_2d_{101})+a_{001}a_{100}f\nonumber\\
&=& a_{100}(b_{011}+a_{001}f)+a_{010}(b_{101}+\lambda_2d_{101})\nonumber
\end{eqnarray}
which is an expression $\Delta= a'b'+c'd'$ of the desired form.

\end{proof}

\begin{thm} There exist cubic homogeneous polynomials $f_1,f_2,f_3$ with collective strength 2 that do not form a regular sequence, i.e. $N(3,3)>2$.
\end{thm}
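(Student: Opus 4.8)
The statement has two parts: that $f_1,f_2,f_3$ fail to be a regular sequence, and that their collective strength equals $2$. The first part is already in hand: $f_1,f_2,f_3$ are three of the four maximal minors $\Delta_1,\Delta_2,\Delta_3,\Delta_4$ of the $4\times 3$ matrix $M$, so by the Hilbert--Burch discussion in Section 4 the ideal $\langle f_1,f_2,f_3\rangle$ has codimension $2<3$, and hence by Proposition \ref{reg} they are not a regular sequence. The Laplace expansion of $\Delta_1$ displayed above shows each $\Delta_i$ has strength $\leq 2$, so the collective strength is $\leq 2$. Thus the entire content is to rule out collective strength $\leq 1$: I would show that every nonzero linear combination $\Delta=\alpha f_1+\beta f_2+\gamma f_3$ has strength at least $2$.

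Suppose for contradiction that some such $\Delta\neq 0$ has strength $\leq 1$. If the strength is $1$, then $\Delta=ab+cd$ with $a,c$ linear and $b,d$ quadric, and Lemma \ref{homog} lets me replace these by forms $a',b',c',d'$ that are homogeneous in the column grading with $\Delta=a'b'+c'd'$; if the strength is $0$ then $\Delta=ab$ factors, and since $\Delta$ is column-homogeneous and the ring is a graded domain the factors are automatically column-homogeneous. In either case $\Delta$ lies in the ideal $\langle a',c'\rangle$ generated by (at most two) column-homogeneous linear forms. After discarding the degenerate sub-case where $a',c'$ are dependent (which only makes $\Delta$ divisible by a single column-homogeneous linear form), I may assume $a',c'$ are linearly independent, and then the classification lemma for pairs of column-homogeneous linear forms applies: there is an element $g$ of $GL_4\times GL_3$ carrying $\langle a',c'\rangle$ to one of the three standard ideals $J\in\{\langle x_{11},x_{21}\rangle,\ \langle x_{11},x_{12}\rangle,\ \langle x_{11},x_{22}\rangle\}$.

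The crucial observation is that the four-dimensional space $W=\mathrm{span}(\Delta_1,\Delta_2,\Delta_3,\Delta_4)$ of maximal minors is stable under the natural $GL_4\times GL_3$-action: row operations send the vector of maximal minors to a linear combination of itself (the action on $\wedge^3$ of the row space), and column operations only rescale all minors by a determinant. Hence $g\cdot\Delta$ is again a nonzero element of $W$, and it lies in the standard ideal $J$. Note that $g\cdot\Delta$ may now involve $\Delta_4$ as well, which is exactly why I work with the full span $W$ rather than with $\mathrm{span}(f_1,f_2,f_3)$. It therefore suffices to prove that $W\cap J=\{0\}$ for each of the three standard ideals $J$. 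This I would check directly: for $J=\langle x_{11},x_{21}\rangle$, set $x_{11}=x_{21}=0$ and compute the resulting restriction of a general combination $\sum_i c_i\Delta_i$; isolating the coefficients of $x_{31}$ and of $x_{41}$ and then reading off individual monomials (for instance $x_{22}x_{33}$, $x_{12}x_{33}$, $x_{12}x_{23}$) forces all $c_i=0$. The other two ideals are handled by the same recipe, each time choosing a monomial that occurs in exactly one $\Delta_i$ after the specialization.

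The main obstacle is conceptual rather than computational: one must correctly identify that the normalization via $GL_4\times GL_3$ can only be carried out on the pair $(a',c')$ at the cost of moving $\Delta$ within the full minor space $W$, so the vanishing statement $W\cap J=\{0\}$ has to hold for all of $W$ (all four minors) and for all three inequivalent normal forms, not merely for combinations of $f_1,f_2,f_3$. Once $GL_4\times GL_3$-stability of $W$ and the three specialization computations are in place, the contradiction shows every nonzero combination has strength $\geq 2$; together with the Laplace bound this gives collective strength exactly $2$, and since $f_1,f_2,f_3$ are not a regular sequence we conclude $N(3,3)>2$.
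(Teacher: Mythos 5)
Your proposal is correct, and it follows the same overall strategy as the paper: non-regularity from the Hilbert--Burch discussion of Section 4, the Laplace expansion giving collective strength $\leq 2$, Lemma \ref{homog} to make a putative strength-one decomposition column-homogeneous, the Section 6 classification of pairs of column-homogeneous linear forms up to $GL_4\times GL_3$, and an explicit monomial count modulo each of the three standard ideals. But there is one point where you do genuinely more than the paper, and it matters. The paper normalizes $(a,c)$ to one of the pairs $(x_{11},x_{21})$, $(x_{11},x_{12})$, $(x_{11},x_{22})$ and then checks only that no nonzero combination $c_1\Delta_1+c_2\Delta_2+c_3\Delta_3$ lies in the corresponding ideal $J$. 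That normalization, however, is effected by some $g\in GL_4\times GL_3$, and such a $g$ does \emph{not} preserve $\mathrm{span}(\Delta_1,\Delta_2,\Delta_3)$: row operations involving the fourth row mix in $\Delta_4$ (e.g.\ normalizing the legitimate column-homogeneous pair $(x_{11},x_{41})$ to $(x_{11},x_{21})$ requires swapping rows $2$ and $4$, which sends $\Delta_2$ to $\pm\Delta_4$). Only the full four-dimensional space $W=\mathrm{span}(\Delta_1,\Delta_2,\Delta_3,\Delta_4)$ is $GL_4\times GL_3$-stable, by Cauchy--Binet. So the check that must be performed is $W\cap J=\{0\}$, including the $\Delta_4$-coefficient, and this is exactly what you identify and carry out; the resulting monomial count still works (the specialized monomials coming from all four minors remain distinct), so your version of the argument is complete where the paper's, read literally, has a gap. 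You also dispose of the strength-zero case explicitly, via the fact that factors of a multigraded homogeneous element in a polynomial ring are themselves multigraded; the paper passes over this case silently. In short: same route, but your insistence on working with all four minors and all three normal forms is the correct formulation of the final computation, and it is what makes the contradiction airtight.
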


\begin{proof}
Let $f_i=\det\Delta_i$ for $i=1,2,3$. By the arguments in Section 4, we know that $f_1, f_2, f_3$ do not form a regular sequence, and that they have collective strength at most 2.  So to prove this theorem, it suffices to prove that the collective strength of $f_1, f_2$, and $f_3$ is at least 2.\\

Let 

$\Delta=c_1\Delta_1+c_2\Delta_2+c_3\Delta_3=
c_1\Big (x_{21}(x_{32}x_{43}-x_{33}x_{42})-x_{22}(x_{31}x_{43}-x_{41}x_{33})+x_{23}(x_{31}x_{42}-x_{32}x_{41})\Big )+c_2\Big (x_{11}(x_{32}x_{43}-x_{33}x_{42})-x_{12}(x_{31}x_{43}-x_{33}x_{41})+x_{13}(x_{31}x_{42}-x_{32}x_{41})\Big )+c_3\Big (x_{11}(x_{22}x_{43}-x_{42}x_{23})-x_{12}(x_{21}x_{43}-x_{41}x_{23})+x_{13}(x_{21}x_{42}-x_{41}x_{22})\Big )$\\

Suppose that this collective strength is 1.  Then there is some $\mathbb C$-linear combination $\Delta = c_1f_1 + c_2f_2 + c_3f_3$ which has strength 1, i.e. $\Delta = ab+cd$, with $a,c$ linear.  By Lemma 7.3 we can assume that $a$ and $c$ are homogeneous in the column grading.  By Lemma 6.4, we can reduce to the cases where $a = x_{11}$ and $c = x_{22}$ or $x_{12}$ or $x_{21}$.  We now perform an explicit computation to show that this is impossible.\\

%

%

Recall that\\

$\Delta=c_1\Delta_1+c_2\Delta_2+c_3\Delta_3=
c_1\Big (x_{21}(x_{32}x_{43}-x_{33}x_{42})-x_{22}(x_{31}x_{43}-x_{41}x_{33})+x_{23}(x_{31}x_{42}-x_{32}x_{41})\Big )+c_2\Big (x_{11}(x_{32}x_{43}-x_{33}x_{42})-x_{12}(x_{31}x_{43}-x_{33}x_{41})+x_{13}(x_{31}x_{42}-x_{32}x_{41})\Big )+c_3\Big (x_{11}(x_{22}x_{43}-x_{42}x_{23})-x_{12}(x_{21}x_{43}-x_{41}x_{23})+x_{13}(x_{21}x_{42}-x_{41}x_{22})\Big )$\\

\textbf{Case 1} $I_1=(x_{11},x_{12})$\\

We consider $\Delta$ modulo $I_1$.  Imposing the relations $x_{11}=x_{12}=0$, $\Delta$ reduces to\\

$c_1\Big (x_{21}(x_{32}x_{43}-x_{33}x_{42})-x_{22}(x_{31}x_{43}-x_{41}x_{33})+x_{23}(x_{31}x_{42}-x_{32}x_{41})\Big )+c_2\Big (x_{13}(x_{31}x_{42}-x_{32}x_{41})\Big )+c_3\Big (x_{13}(x_{21}x_{42}-x_{41}x_{22})\Big )=0$.\\

This can be re-written as\\

$\Delta/I_1=c_1 x_{21}x_{32}x_{43}-c_1x_{32}x_{33}x_{42}-c_1x_{22}x_{31}x_{43}+c_1x_{22}x_{41}x_{33}+c_1x_{23}x_{31}x_{42}-c_1x_{23}x_{32}x_{41}+c_2x_{13}x_{31}x_{42}-c_2x_{13}x_{32}x_{41}+c_3x_{13}x_{21}x_{42}-c_3x_{13}x_{41}x_{22}=0$.\\

Note that these 10 terms: $x_{21}x_{32}x_{43}, x_{32}x_{33}x_{42}, x_{22}x_{31}x_{43}, \cdots$ are linearly independent, so their coefficients $c_1, c_2$ and $c_3$ must be zero. This is a contradiction because $\Delta=c_1\Delta_1+c_2\Delta_2+c_3\Delta_3\neq 0$. Therefore, we conclude that $\Delta/I_1\neq 0$, which means, $\Delta\not\in I_1=(x_{11},x_{12})$.\\

\textbf{Case 2} $I_2=(x_{11},x_{21})$\\

We consider $\Delta$ modulo $I_2$.  Imposing the relations $x_{11}=x_{21}=0$, $\Delta$ reduces to\\

$c_1\Big (-x_{22}(x_{31}x_{43}-x_{41}x_{33})+x_{23}(x_{31}x_{42}-x_{32}x_{41})\Big )+c_2\Big (-x_{12}(x_{31}x_{43}-x_{33}x_{41})+x_{13}(x_{31}x_{42}-x_{32}x_{41})\Big )+c_3\Big (-x_{12}(-x_{41}x_{23})+x_{13}(-x_{41}x_{22})\Big )=0$\\

This can be re-written as\\

$-c_1x_{22}x_{31}x_{43}+c_1x_{22}x_{41}x_{33}+c_1x_{23}x_{31}x_{42}-c_1x_{23}x_{32}x_{41}-c_2x_{12}x_{31}x_{43}+c_2x_{12}x_{33}x_{41}+c_2x_{13}x_{31}x_{42}-c_2x_{13}x_{32}x_{41}+c_3x_{12}x_{41}x_{23}-c_3x_{13}x_{41}x_{22}=0$.\\

Note that these 10 terms: $x_{22}x_{31}x_{43}, x_{22}x_{41}x_{33}x, x_{23}x_{31}x_{42}, \cdots$ are linearly independent, so their coefficients $c_1, c_2$ and $c_3$ must be zero. This is a contradiction because $\Delta=c_1\Delta_1+c_2\Delta_2+c_3\Delta_3\neq 0$. Therefore, we conclude that $\Delta/I_2\neq 0$, which means, $\Delta\not\in I_2=(x_{11},x_{21})$.\\

\textbf{Case 3} $I_3=(x_{11},x_{22})$\\

We consider $\Delta$ modulo $I_3$.  Imposing the relations $x_{11}=x_{22}=0$, $\Delta$ reduces to\\

$c_1\Big (x_{21}(x_{32}x_{43}-x_{33}x_{42})+x_{23}(x_{31}x_{42}-x_{32}x_{41})\Big )+c_2\Big (-x_{12}(x_{31}x_{43}-x_{33}x_{41})+x_{13}(x_{31}x_{42}-x_{32}x_{41})\Big )+c_3\Big (-x_{12}(x_{21}x_{43}-x_{41}x_{23})+x_{13}(x_{21}x_{42})\Big )=0$\\

This can be re-written as\\

$c_1x_{21}x_{32}x_{43}-c_1x_{21}x_{33}x_{42}+c_1x_{23}x_{31}x_{42}-c_1x_{23}x_{32}x_{41}-c_2x_{12}x_{31}x_{43}+c_2x_{12}x_{33}x_{41}+c_2x_{13}x_{31}x_{42}-c_2x_{13}x_{32}x_{41}-c_3x_{12}x_{21}x_{43}+c_3x_{12}x_{41}x_{23}+c_3x_{13}x_{21}x_{42}=0$.\\

Note that these 10 terms: $x_{21}x_{32}x_{43},x_{21}x_{33}x_{42},x_{23}x_{31}x_{42}, \cdots$ are linearly independent, so their coefficients $c_1, c_2$ and $c_3$ must be zero. This is a contradiction because $\Delta=c_1\Delta_1+c_2\Delta_2+c_3\Delta_3\neq 0$. Therefore, we conclude that $\Delta/I_3\neq 0$, which means, $\Delta\not\in I_3=(x_{11},x_{22})$.\\

Thus, the collective strength of $f_1,f_2,f_3$ is not 1, hence is 2. Therefore, we proved that there exists cubic homogeneous polynomials $f_1,f_2,f_3$ with collective strength 2 that is not a regular sequence. i.e. $N(3,3)>2$.
\end{proof}
\newpage
\begin{bibdiv}
\begin{biblist}

\bib{Ananyan-Hochster}{article}{
   author={Ananyan, Tigran},
   author={Hochster, Melvin},
   title={Ideals generated by quadratic polynomials},
   series={Math. Research Letters},
   volume={19},
   publisher={Math. Research Letters},
   date={2012},
   pages={233--244},
}

\bib{Ananyan-Hochster-small}{article}{
   author={Ananyan, Tigran},
   author={Hochster, Melvin},
   title={Small subalgebras of polynomial rings and Stillman's conjecture},
   series ={arXiv:1610.09268},
   publisher={preprint},
   date={2016},
}

\bib{Ananyan-Hochster-strength}{article}{
   author={Ananyan, Tigran},
   author={Hochster, Melvin},
   title={Strength conditions, small subalgebras, and Stillman bounds in degree},
   series={arXiv:1810.00413},
   publisher={Math. AC},
   date={2018},
}

\bib{Erman-Sam-Snowden}{article}{
   author={Erman, Daniel},
   author={Sam, Steven},
   author = {Snowden, Andrew},
   title={Big polynomial rings and Stillman's conjecture},
   series ={arXiv:1801.09852v2},
   publisher={math. AC},
   date={2018},
}

\bib{Miller-Stephenson}{article}{
   author={Miller, Nina},
   author={Stephenson, Darin},
   author = {Wells, Andrew},
   author = {Erika, Wittenborn},
   title={Counting Quadratic Forms of Rank 1 and 2},
   publisher={Pi Mu Epsilon Journal Vol. 12},
   date={2009},
}

\bib{Eisenbud}{book}{
   author={Eisenbud, David},
   title={Commutative Algebra with a view toward Algebraic Geometry},
   series ={Graduate Texts in Mathematics},
   publisher={Springer},
   date={2004},
}

\bib{Harris}{book}{
   author={Harris, Joe},
   title={Algebraic Geometry},
   series ={Graduate Texts in Mathematics},
   publisher={Springer},
   date={1992},
}

\end{biblist}
\end{bibdiv}
%
%

\end{document}